\let\oldmarginpar\marginpar 
\renewcommand\marginpar[1]{\-\oldmarginpar{\raggedright\small\sf #1}}
\title{Chern classes of conformal blocks}
\author{Najmuddin Fakhruddin}
\address{School of Mathematics, Tata Institue of Fundamental Research, Homi Bhabha Road, Mumbai 400005, India}
\email{naf@math.tifr.res.in}
\newcommand{\nc}{\newcommand}
\nc{\bs}{\backslash}
\nc{\te}{\otimes}
\nc{\lf}{\lfloor} 
\nc{\rf}{\rfloor}
\nc{\lc}{\lceil}  
\nc{\rc}{\rceil}
\nc{\lr}{\longrightarrow}
\nc{\sr}{\stackrel}
\nc{\dar}{\dashrightarrow}
\nc{\thra}{\twoheadrightarrow}
\nc{\mc}{\mathcal}
\nc{\mb}{\mathbb}
\nc{\mf}{\mathbf}
\nc{\mr}{\mathrm}
\nc{\mg}{\mathfrak}
\nc{\bP}{\mathbb{P}}
\renewcommand{\P}{\mathbb{P}}
\nc{\Q}{\mathbb{Q}}
\nc{\Z}{\mathbb{Z}}
\nc{\C}{\mathbb{C}}
\nc{\R}{\mathbb{R}}
\nc{\A}{\mathbb{A}}
\nc{\V}{\mathbb{V}}
\nc{\W}{\mathbb{W}}
\nc{\N}{\mathbb{N}}
\nc{\D}{\mathbb{D}}
\nc{\del}{\partial}
\nc{\aff}{{\A}^1}
\nc{\naive}{\!\sim_n}
\renewcommand{\l}{\lambda}
\renewcommand{\k}{\kappa}
\nc{\vt}{\vartheta}
\nc{\ovl}{\ov{\lambda}}
\nc{\vl}{\mb{V}_{\ovl}}
\nc{\dl}{\mb{D}_{\ovl}}
\nc{\mnb}{\ov{\mr{M}}_{0,n}}
\nc{\mn}{\mr{M}_{0,n}}
\nc{\mel}{\ov{\mr{M}}_{1,1}}
\nc{\mfb}{\ov{\mr{M}}_{0,4}}
\nc{\mof}{\mr{M}_{0,4}}
\nc{\mgnb}{\ov{\mr{M}}_{g,n}}
\nc{\mgn}{\ov{\mr{M}}_{g,n}}
\nc{\omc}{\ov{\mr{M}}}
\nc{\omx}{\omega_X}
\nc{\ep}{\epsilon}
\nc{\vp}{\varpi}
\nc{\wt}{\widetilde}
\nc{\wh}{\widehat}
\nc{\ov}{\overline}
\nc{\res}{\operatorname{Res}}
\nc{\pic}{\operatorname{Pic}}
\nc{\spec}{\operatorname{Spec}}
\newtheorem{thm}{Theorem}[section]
\newtheorem{prop}[thm]{Proposition}
\newtheorem{cor}[thm]{Corollary}
\newtheorem{lem}[thm]{Lemma}
\theoremstyle{definition}
\newtheorem{ques}[thm]{Question}
\newtheorem{rem}[thm]{Remark}
\newtheorem{rems}[thm]{Remarks}
\numberwithin{equation}{section}
\begin{document}

\begin{abstract}
  We derive a formula for the Chern classes of the bundles of
  conformal blocks on $\mnb$ associated to simple finite dimensional
  Lie algebras and explore its conseqences in more detail for
  $\mg{g} = \mg{sl}_2$ and for arbitrary $\mg{g}$ and level $1$. We
  also give a method for computing the first Chern class of such
  bundles on $\mgnb$ for $g>0$.
\end{abstract}

\maketitle

\section{Introduction}

The mathematical theory of conformal blocks of Tsuchiya--Kanie
\cite{tsuchiya-kanie} and Tsuchiya--Ueno--Yamada \cite{TUY} gives rise
to a family of vector bundles, parametrised by a simple Lie algebra
$\mg{g}$, a non-negative integer $\ell$ called the level, and an
$n$-tuple of dominant weights for $\mg{g}$ of ``level $\ell$'', on
$\mgnb$, the moduli stack of stable $n$-pointed curves of genus $g$.
The ranks of these bundles are given by the celebrated Verlinde
formula (see \cite{sorger-verlinde} for a survey); the purpose of this
article is to investigate the Chern classes of these bundles, with an
emphasis on $c_1$ or the determinant bundle.

These bundles of conformal blocks have been objects of interest to
algebraic geometers ever since it was realised that they can be
described in terms of sections of natural line bundles on suitable
moduli stacks of parabolic principal bundles on curves. However, our
motivation for studying these bundles is the hope that we will get
some insight into the geometry of the moduli stacks $\mgnb$
themselves, especially when $g=0$.  The reason for this is that the
bundles of conformal blocks on $\mnb$ are generated by global
sections, hence their Chern classes are all nef. In particular,
considering the first Chern classes of these bundles we get a large
collection of elements in the cone of nef divisors of $\mnb$ and our
goal is to describe the classes so obtained.

Our main result, Theorem \ref{thm:chern}, is a formula for the Chern
classes of the bundles of conformal blocks on $\mnb$. As a special
case of this we obtain explicit divisors representing the first Chern
class, Corollary \ref{cor:cherncan}, as well as a formula for its
degree on any F-curve. We refer to \S \ref{sec:results} for the
precise statements of these results which are given in terms of ranks
of auxiliary bundles of conformal blocks, which may be computed using
the Verlinde formula, and the eigenvalues of the Casimir operator of
$\mg{g}$ acting on finite dimensional representations of level
$\ell$. The essential ingredients that we use are:
\begin{itemize}
\item The KZ connection on the restriction of these bundles to $\mn$,
\item a formula for the residue of this connection along the boundary
  divisors and
\item a formula of Esnault and Verdier from \cite{EV-logderham} which
  allows one to compute the Chern classes of a vector bundle on a
  smooth projective variety in terms of the residues of a logarithmic
  connection.
\end{itemize}

For $\mg{g} = \mg{sl}_2$, we show that the non-zero determinants of
conformal blocks of level $1$ form a basis of $\pic(\mnb)_{\Q}$
(Theorem \ref{thm:basis}). For what we call the critical level, we
show that the bundles are pulled back from suitable GIT quotients
$(\P^1)^n \sslash SL_2$ (Theorem \ref{thm:crit}); in general they are
pulled back from suitable moduli spaces of weighted stable curves
constructed by Hassett (Proposition \ref{prop:hassett}).

We also specialize our results to the case of an arbitrary simple Lie
algebra $\mg{g}$ and $\ell = 1$. A particularly interesting case is
that of $\mg{sl}_m$; the critical level $\mg{sl}_2$ determinants
reappear here for suitable choices of weights and it seems likely that
all level $1$ conformal blocks have a GIT interpretation. This has
been shown to be true for symmetric weights by N.~Giansiracusa
\cite{giansiracusa-conformal}. The case of $\mg{sp}_{2\ell}$ is also
noteworthy; the bundles of conformal blocks for these Lie algebras at
level $1$ turn out to be related to $\mg{sl}_2$ conformal blocks at
level $\ell$. This gives rise to certain F-nef divisors on $\mnb$
which we do not know are nef.

For $g>0$ one may, in principal, use a method similar to the $g=0$
case for computing the Chern classes in cohomology. However, this is
more involved since the bundles of conformal blocks do not have a flat
connection --- the first Chern class of these bundles restricted to
$\mgn$ is in general non-trivial --- so we do not work out the details
here and restrict ourselves to computing the first Chern class. Basic
properties of the moduli stacks and conformal blocks reduce this to
the case of $\mel$ in which case we give an explicit formula in
Theorem \ref{thm:m11}. We see from this that for $g>0$ the first Chern
class is almost never nef. An exception, perhaps the only one, is the
case of $\mg{e}_8$ and level $1$ in which case the bundle of conformal
blocks is the fourth tensor power of the Hodge bundle. Finally, we
note that a formula for the Chern classes of the conformal blocks
bundles restricted to $\mr{M}_{g,n}$ can be obtained from the results
of Tsuchimoto \cite{tsuchimoto}.

\smallskip 

The paper is structured as follows: In \S \ref{sec:conformal} we
recall some of the properties of conformal blocks that we shall need,
mostly without proof but we give some details in cases where we do not
know precise references. In \S \ref{sec:g0} we derive our main formula
for the Chern classes in genus $0$ and deduce some corollaries. In \S
\ref{sec:sl2} we consider the case of $\mg{sl}_2$ and in \S
\ref{sec:level1} that of the general level $1$ case in more detail. We
then consider the higher genus case in \S \ref{sec:g>0} and conclude
the paper in \S \ref{sec:questions} by discussing some questions.

\subsection{Notation}
We will work over an arbitrary field $k$ of characteristic zero.
$\mg{g}$ will always denote a finite dimensional split simple Lie
algebra over $k$, $\mg{h} \subset \mg{g}$ a Cartan subalgebra, $\Delta
\subset \mg{h}^*$ the corresponding root system and
$\alpha_1,\alpha_2,\dots,\alpha_r$ a basis of the root system inducing
a partition $\Delta = \Delta^+ \cup \Delta^-$. Let $P \subset
\mg{h}^*$ be the weight lattice and $P_+ \subset P$ be the set of
dominant weights.  For $\l \in P_+$ we let $V_{\l}$ denote the
corresponding irreducible representation of $\mg{g}$.  Let $\theta \in
\Delta^+$ be the highest root and let $\mg{s} \cong \mg{sl}_2$ be the
subalgebra of $\mg{g}$ generated by $H_{\theta}$ and $H_{-\theta}$,
where $H_{\alpha}$, for a root $\alpha$, is the corresponding coroot.

We normalize the Killing form $( |)$ on $\mg{g}$ so that
$(\theta|\theta) = 2$ and use the same notation for the induced forms
on $\mg{h}$ and $\mg{h}^*$.  We let $\{X_\k\}_{\k}$ be an orthonormal
basis of $\mg{g}$ with respect to this form.  Let $h^{\vee}$ be the
dual Coxeter number of $\mg{g}$; this is half the scalar by which the
normalised Casimir element $\Omega:=\sum_{\k} X_{\k} \circ X_\k$ acts
on the adjoint representation of $\mg{g}$. For any $\l \in P_+$, we
let $c(\l)$ be the scalar by which $\Omega$ acts on $V_{\l}$; this is
equal to $(\l| \l + 2\rho)$, where $\rho$ is half the sum of the
positive roots. For $\ell$ a non-negative integer, let $P_{\ell} =
\{\l \in P_+ \ | \ (\l|\theta) \leq \ell \}$.  For $\l \in P_+$, let
$\l^*$ be the highest weight of $(V_{\l})^*$; if $\l \in P_{\ell}$
then $\l^* \in P_{\ell}$ as well.

We shall add $\mg{g}$ as a subscript or superscript to any of the
above notation, or notation introduced later, if it is necessary to
make $\mg{g}$ explicit.

\subsection{Acknowledgements}
I thank Patrick Brosnan, Sreedhar Dutta, Norbert Hoffmann, Arvind Nair
and Madhav Nori for helpful discussions.  Discussions with Valery
Alexeev about his work with Swinarksi and also the work of Hassett
were very useful and I am grateful to him for them.  I am particulary
indebted to Prakash Belkale for a long correspondence which helped me
to learn about conformal blocks and also for his comments on this
paper.

A part of this work was done while I was visiting Universit\'e
Paris-Sud (Orsay) supported by the project ``ARCUS''. I thank Laurent
Clozel for the invitation and Universit\'e Paris-Sud for its
hospitality.

\section{Conformal blocks} \label{sec:conformal} 

In this section we recall and reformulate some of the basic
definitions and results in the theory of conformal blocks that we
shall need later. The original references are \cite{tsuchiya-kanie}
and \cite{TUY} but we shall use \cite{ueno-conformal} and
\cite{looijenga-conformal} to which we shall refer for most proofs.

\subsection{Construction} \label{subsec:conf} 

Let $S$ be a smooth variety over $k$ and $\pi:\mc{C} \to S$ be a
proper flat family of curves with only ordinary double point
singularities; we do not assume that the fibres of $\pi$ are
connected. Let $\ov{p} = (p_1,\dots,p_n)$, with $p_i:S \to \mc{C}$
sections of $\pi$ whose images are disjoint and contained in the
smooth locus of $\pi$. We also assume that $\mc{C} \bs \cup_i p_i(S)$
is affine over $S$.  Let $\ovl = (\l_1,\dots,\l_n)$ be an $n$-tuple of
elements of $P_\ell$. To this data is attached a canonically defined
locally free sheaf $V_{\mc{C}}(\ov{p},\ovl)$ on $S$, called the sheaf
of covacua \cite{TUY}, \cite{ueno-conformal}, \cite{sorger-verlinde}
or, as we shall call them, conformal blocks
\cite{looijenga-conformal}.  We briefly recall the construction:

Let $\hat{\mg{g}}$ be the affine Lie algebra defined by
    \[
    \hat{\mg{g}} := \bigl ( \mg{g} \otimes k((\xi)) \bigr )\oplus k \cdot \mr{c}
    \]
    where $\mr{c}$ is in the centre of $\hat{\mg{g}}$ and the bracket is defined
    by 
    \[[X \otimes f(\xi), Y \otimes g(\xi)] = [X,Y] \otimes
    f(\xi)g(\xi) + (X,Y)\res(g(\xi)df(\xi))\cdot \mr{c} \ .
    \]
  Put
  \[
  \hat{\mg{g}}_+ = \mg{g} \otimes k[[\xi]]\xi \ , \ \ \ \hat{\mg{p}}_+ =
  \hat{\mg{g}}_+ \oplus \mg{g} \oplus k \cdot \mr{c} \ .
  \]
  For $\l \in P_{\ell}$, we extend the action of $\mg{g}$ on
  $V_{\ell}$ to an action of $\hat{\mg{p}}_+$ by setting
\begin{itemize}
\item $\mr{c}v = \ell v$ for all  $v \in V_{\l}$
\item 
$av = 0$ for all $a \in \hat{\mg{g}}_+$ and $v \in V_{\l}$.
\end{itemize}
Put
\[
\mc{M}_{\l} := U(\hat{\mg{g}})\otimes_{U(\hat{\mg{p}}_+)}V_{\l} \ .
\]
This is a  representation of $\hat{\mg{g}}$ and has a unique 
maximal proper submodule $\mc{I}_{\l}$. Then 
\[
\mc{H}_{\l} := \mc{M}_{\l} / \mc{I}_{\l}
\]
is an irreducible $\hat{\mg{g}}$ module.

\smallskip

For an integer $n>0$, let 
\[
 \hat{\mg{g}}_n := \mg{g}\otimes  \Bigl (\bigoplus_{i=1}^nk((\xi_i)) \Bigr ) \oplus k \cdot c \ .
\]
This has a natural Lie algebra structure making it into a quotient of
$(\hat{\mg{g}})^n$.

\smallskip

For $\ovl \in P_{\ell}^n$, let
\[
\mc{H}_{\ovl} := \bigotimes_{i=1}^n \mc{H}_{\l_i} \ .
\]
This is an irreducible representation of $\hat{\mg{g}}_n$; the natural
action of $(\hat{\mg{g}})^n$ factors through $\hat{\mg{g}}_n$.

We first assume that $S$ is affine, so $S = \spec(A)$ for some
$k$-algebra $A$, and we also assume that there are given isomorphisms
$\eta_i: \widehat{\mc{O}}_{\mc{C}, p_i(S)} \to A[[\xi]]$ for each $i$.
Let $B = \Gamma(\mc{C} - \cup_{i=1}^n p_i(S))$. For each $i$, using
the isomorphism $\eta_i$, we get a map $B \to A((\xi_i))$ which
induces injections
\[
\mg{g} \otimes_k B \to  \mg{g} \otimes_k \Bigl
(\bigoplus_{i=1}^nk((\xi_i)) \Bigr ) \otimes_k A \to \hat{\mg{g}}_n
\otimes_k A \ .
\]
The fact that the sum of the residues of a $1$-form on a smooth
projective curve is zero implies that this injection makes $\mg{g}
\otimes_k B $ into a sub Lie algebra of $\hat{\mg{g}}_n \otimes_k A$.

By linearity $\mc{H}_{\ovl} \otimes_k A$ is a representation of
$\hat{\mg{g}}_n \otimes_k A$ and the sheaf of conformal blocks
$V_{\mc{C}}(\ov{p},\ovl)$ is defined to be the quasi-coherent sheafon
$S$, which is in fact locally free of finite rank, corresponding to
the $A$-module
\[
\mc{H}_{\ovl} \otimes_k A / (\mg{g} \otimes_k B)\cdot (\mc{H}_{\ovl} \otimes_k A ) \ .
\]
One may give an intrinsic description of this module without using the
isomorphisms $\eta_i$; we only use this implicitly so we refer the
reader to \cite{looijenga-conformal} for the details. Since such
isomorphisms always exist Zariski locally on any $S$, we may sheafify
the above construction to define a locally free sheaf
$V_{\mc{C}}(\ov{p},\ovl)$ for general $S$.

Note that if $\mc{C} = \mc{C}' \cup \mc{C}''$ is a disjoint union of
two families of semi-stable curves over $S$ then we have
$V_{\mc{C}}(\ov{p},\ovl) \cong V_{\mc{C}'}(\ov{p}',\ovl') \otimes
V_{\mc{C}''}(\ov{p}'',\ovl'')$, where $\ov{p}'$ denotes the tuple of
sections lying in $\mc{C}'$, $\ovl'$ denotes the corresponding tuple
of weights and similarly for $\ov{p}''$ and $\ovl''$.

\subsection{Basic properties}
\label{sec:basic} \marginpar{sec:basic}

Keeping the previous notation, let $\ov{q}= (q_1,\dots,q_m)$ with the
$q_j$ also sections of $\pi$ with images which are disjoint, disjoint
from all the $p_i(S)$ and also contained in the smooth locus of
$\pi$. Let $\ov{pq} = (p_1,\dots,p_n,q_1,\dots,q_m)$, $\ov{\lambda
  0_m} = (\l_1,\dots,\l_n, 0,\dots,0)$, where we have $m$ zeros.  We
then have the following \cite[2.3.2]{sorger-verlinde},
\cite[Proposition 3.4]{looijenga-conformal}:
\begin{prop}[Propagation of vacua]
  \label{prop:prop} \marginpar{prop:prop} There is a natural
  isomorphism $V_{\mc{C}}(\ov{p}, \ovl) \sr{\sim}{\lr}
  V_{\mc{C}}(\ov{pq},\ov{\lambda 0_m})$. Moreover, these isomorphisms
  are compatible as $m$ varies.
\end{prop}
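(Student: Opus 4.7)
The plan is to reduce to the case $m=1$, since the general case follows by induction once the compatibility with composition is known, and this compatibility is automatic from the canonicity of the construction. So I fix a single extra section $q$ with label $0$ and drop the bar notation for it. Recall that $V_{\mc{C}}(\ov{p},\ovl)$ is realised as the cokernel of the action of the sheaf of Lie algebras $\mg{g}_{\ov{p}} := \mg{g} \te_K \pi_*\mc{O}_{\mc{C} \setminus \cup_i p_i(S)}$ on $(\bigotimes_i H_{\l_i}) \te_K \mc{O}_S$, where $H_{\l_i}$ is the integrable level $\ell$ highest-weight representation of $\wh{\mg{g}}$ with highest weight $\l_i$, acting via the Laurent expansion of functions at each $p_i$.

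The candidate map is $\phi: [v] \mapsto [v \te |0\rangle]$, where $|0\rangle \in H_0$ is the vacuum vector of the basic representation. It is well-defined at the level of cokernels because we have an inclusion $\mg{g}_{\ov{p}} \hookrightarrow \mg{g}_{\ov{pq}}$, and any $X \te f \in \mg{g}_{\ov{p}}$ has $f$ regular at $q$, hence acts at $q$ through $\mg{g} \te K[[t_q]]$, which annihilates $|0\rangle$. Thus $\phi$ intertwines the $\mg{g}_{\ov{p}}$-action with the restriction to $\mg{g}_{\ov{p}}$ of the $\mg{g}_{\ov{pq}}$-action.

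For surjectivity I would induct on the conformal weight of the last tensorand of a representative $v \te w \in H_{\ovl} \te H_0$. Since $H_0$ is generated from $|0\rangle$ by the negative modes $\mg{g} \te t_q^{-1} K[t_q^{-1}]$, it suffices to produce, for each $X \in \mg{g}$ and each $n \geq 1$, a global section of $\mg{g} \te \mc{O}_{\mc{C}\setminus\ov{pq}}$ whose Laurent expansion at $q$ begins $X \te t_q^{-n}$; the existence of such a section follows from the affineness hypothesis on $\mc{C} \setminus \cup_i p_i(S)$ applied fibrewise (equivalently, from Riemann--Roch). Subtracting its action from $v \te X(-n)w'$ absorbs the leading term into the action of $\mg{g}_{\ov{pq}}$ and leaves a representative with strictly smaller conformal weight in the last slot.

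The main obstacle is injectivity. My strategy is to construct a left inverse of $\phi$. Let $\psi_0: H_0 \thra K$ be the projection onto the line spanned by $|0\rangle$ in the conformal-weight grading, and set $\psi := \mr{id}_{H_{\ovl}} \te \psi_0: H_{\ovl^0} \to H_{\ovl}$. Since $\psi \circ \phi = \mr{id}$, it suffices to show that $\psi$ descends to the cokernels, i.e., $\psi(\mg{g}_{\ov{pq}} \cdot H_{\ovl^0}) \subseteq \mg{g}_{\ov{p}} \cdot H_{\ovl}$. The crux is the Mittag--Leffler exact sequence
\[
0 \lr H^0(\mc{C}, \mc{O}) \lr H^0(\mc{C}\setminus \ov{pq}, \mc{O}) \lr \bigoplus_{p \in \ov{pq}} K((t_p))/K[[t_p]] \lr H^1(\mc{C}, \mc{O}) \lr 0
\]
applied fibrewise, which controls how a given collection of principal parts at $\ov{pq}$ can be realised by a single global section, combined with an induction on conformal weight to absorb the higher-order contributions coming from acting through the $q$-expansion. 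Granted this verification, $\phi$ is injective as well as surjective, and the compatibility of the $\phi$'s with composition is immediate from the canonicity of the vacuum-vector insertion.
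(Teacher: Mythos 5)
The paper offers no proof of this proposition --- it is recalled from Sorger and Looijenga --- so your argument must stand on its own. The reduction to $m=1$, the well-definedness of $\phi$ (using that $\mg{g}\te K[[t_q]]$ annihilates $|0\rangle$), and the surjectivity argument by induction on conformal weight are all correct and are the standard ones. The gap is injectivity: the map $\psi=\mr{id}\te\psi_0$ does \emph{not} descend to coinvariants, so the verification you defer is not merely unfinished but false. The obstruction is the central extension of the loop algebra. For $X\te f\in\mg{g}_{\ov{pq}}$ with $f=\sum_m a_mt_q^m$ at $q$ and $w\in H_0$ of positive conformal weight, the degree-zero component of $(Xf)^{(q)}w$ need not vanish, because $[X(m),Y(-m)]$ contains the central term $m(X|Y)\ell$; this produces a multiple of $v$ that $\psi$ retains, while $\psi_0(w)=0$ kills the compensating terms supported at the $p_i$. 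Concretely, take $\mc{C}=\P^1$ with coordinate $t$, a single section $p$ at $t=0$ with $\l_1=0$, and $q$ at $t=1$, so $t_q=t-1$ and $t^{-1}=\sum_{m\ge 0}(-1)^mt_q^m$ near $q$. For $(X|Y)=1$ one has $X(1)Y(-1)|0\rangle=\ell|0\rangle$, hence $\psi\bigl((X\te t^{-1})(|0\rangle\te Y(-1)|0\rangle)\bigr)=-\ell\,|0\rangle$. But $|0\rangle\notin\mg{g}\te K[t^{-1}]\cdot H_0$: that subalgebra acts at $p$ by the modes $X(-m)$, $m\ge 0$, so the degree-zero graded piece of anything in its image lies in $\mg{g}\cdot V_0=0$. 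Thus for $\ell\ge 1$ the image under $\psi$ of an element of $\mg{g}_{\ov{pq}}\cdot H_{\ovl^0}$ escapes $\mg{g}_{\ov{p}}\cdot H_{\ovl}$, and no Mittag--Leffler bookkeeping can repair this: the offending degree-zero contributions come from the cocycle, not from the geometry of principal parts.

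The standard route to injectivity is dual to yours: one shows that every covacuum, i.e.\ every functional on $H_{\ovl}$ vanishing on $\mg{g}_{\ov{p}}\cdot H_{\ovl}$, extends to a functional on $H_{\ovl}\te H_0$ vanishing on $\mg{g}_{\ov{pq}}\cdot H_{\ovl^0}$. The values of the extension on $v\te X_1(-n_1)\cdots X_k(-n_k)|0\rangle$ are forced recursively by the gauge condition --- this is exactly your surjectivity computation read backwards, and it gives uniqueness of the extension --- but the substance of the proposition is the \emph{consistency} of that recursion, which is checked using the structure of $H_0$ as the integrable quotient of the induced $\wh{\mg{g}}$-module by the submodule generated by $X_\theta(-1)^{\ell+1}|0\rangle$. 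That consistency check is the ingredient your sketch is missing, and it cannot be replaced by the projection $\psi_0$.
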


We now drop the condition that $\mc{C} \bs \cup_i p_i(S)$ is affine
over $S$. For $S' \to S$ any morphism, we let $\mc{C}' =
\mc{C}\times_S S'$, $\pi':\mc{C}' \to S'$ the induced morphism and
$p_i'$ the induced sections. We may find an etale cover $S' \to S$ and
$m$ sections $\ov{q'} = (q_1',\dots,q_m')$ of $\pi'$ as above so that
$\mc{C}' \bs( \cup_i p_i'(S') \cup_j q_j'(S'))$ is affine. Therefore,
$ V_{\mc{C}'}(\ov{p'q'},\ov{\lambda 0_m} )$ is defined and is a
locally free sheaf on $S'$. By Proposition \ref{prop:prop} we get
natural descent data for this sheaf with respect to the morphism $S'
\to S$ and we define $V_{\mc{C}}(\ov{p},\ovl)$ to be the descent of
this sheaf to $S$.

To see that this is independent of the choice of the etale cover $S'
\to S$ and sections $q_j'$ we use the same proposition. Note that it
may happen that if we use two sets of auxiliary sections then all the
sections in the union may not be pairwise disjoint. However, since the
sections are disjoint or equal at all generic points of the base $S$,
we get get canonical isomorphisms over a dense open set of $S$. Using
further auxiliary sections disjoint from all the original ones, etale
locally around each point of $S$, one sees that these isomorphisms
extend over all of $S$.

\bigskip

Since the sheaves of conformal blocks are compatible with base change,
the above discussion shows that for any $n$-tuple $\ovl \in
{P_{\ell}}^n$ one has a well defined sheaf of conformal blocks on
$\mgnb$, the moduli stack of stable $n$-pointed curves of
genus $g$.

Let $\ov{p}$, $\ovl$ and $\ov{q}$ be as above with $m=2$.  Let
$\pi_{\mc{D}}:\mc{D} \to S$ be the family of curves obtained by gluing
$\mc{C}$ along the sections $q_1$ and $q_2$. The sections $p_i$ induce
sections of $\pi_{\mc{D}}:\mc{D} \to S$ which we also denote by
$p_i$. For $\mu \in P_{\ell}$, let $\ov{\lambda \mu} =
(\l_1,\dots,\l_n,\mu, \mu^*)$.  We then have \cite[Proposition
4.1]{looijenga-conformal}, \cite[2.4.2]{sorger-verlinde}
\begin{prop}[Factorisation formula]
\label{prop:fact} 
There are natural isomorphisms
\[
V_{\mc{D}}(\ov{p}, \ovl) \sr{\sim}{\lr} \bigoplus_{\mu \in P_{\ell}}
V_{\mc{C}}(\ov{pq}, \ov{\l\mu}) \ .
\]
\end{prop}

\smallskip

Let $\pi:\mc{C} \to S$ be a family of semi-stable curves and $\ov{p}
=(p_1,\dots,p_{n+1})$ an $n+1$-tuple of sections making
$(\mc{C},\ov{p})$ into a family of stable $n+1$-pointed curves for
some $n \geq 0$. Let $\ovl = (\l_1,\dots, \l_n,0) \in
{P_{\l}}^{n+1}$. If we let $\ov{p}' = (p_1,\dots,p_{n})$, then the
family of $n$ pointed curves $(\mc{C},\ov{p}')$ may no longer be
stable. We have a stablisation morphism $\sigma:\mc{C} \to \mc{C}'$
over $S$ which contracts certain smooth rational curves in each fibre
of $\pi$ so that $(\mc{C}', \ov{p}')$ is a stable family of
$n$-pointed curves.  Set $\ov{\l}' = (\l_1,\dots,\l_n)$.
\begin{lem} \label{lem:contract} 
With notation as above, there is a natural isomorphism
of sheaves
\[
V_{\mc{C}'}(\ov{p}',\ov{\l}') \sr{\sim}{\lr} V_{\mc{C}}(\ov{p},\ovl) \ .
\]

\end{lem}

\begin{proof}
  The stabilisation morphism $\sigma$ induces a morphism $\mc{C} \bs
  \cup_{i=1}^{n+1}p_i(S) \to \mc{C}' \bs \cup_{i=1}^n p_i(S)$ which
  gives rise to the map of sheaves.  Since the bundles of conformal
  blocks are locally free, we may check that this map is an
  isomorphism fibrewise.

  There are two types of rational curve components that may get
  contracted by the stabilisation morphism. One type is a smooth
  rational curve with two marked points, one of them being the
  $n+1$'st 
  and intersecting the union of the
  other components in a single point. When this is contracted the
  point of intersection becomes the other marked point.  The other
  type is a smooth rational curve meeting the union of the other
  components in two points and containing the $n+1$'st marked point,
  this being the unique marked point on it.

  In both these cases, a direct application of the factorisation
  formula shows that the map is indeed an isomorphism.
\end{proof}

As a consequence of the above we see the the bundles of conformal
blocks are compatible with the natural morphisms among the moduli
stacks of stable curves. More precisely, we have

\begin{prop} \label{prop:compat} 
For $\ovl \in {P_{\ell}}^n$, let $\mb{V}_{g,n,\ovl}$ denote the sheaf
of conformal blocks associated to $\ovl$ on $\mgnb$.
\begin{enumerate}
\item Let $f_i: \mgn \to \omc_{g,n-1}$ be the
  morphism given by forgetting the $i$'th marked point.  If $\l_i = 0$
  for some $i$, then $\mb{V}_{g,n,\ovl} \cong f_i^*
  (\mb{V}_{g,n,\ov{\l}'})$, where $\ov{\l}'$ is obtained from $\ovl$
  by deleting $\l_i$.
\item Let $ \gamma: \omc_{g_1,n_1+1} \times
  \omc_{g_2,n_2+1} \to \omc_{g_1 + g_2,n_1+n_2}$ be the
  gluing morphism, where we glue along the last marked point for each
  factor. Then for $\ovl \in {P_{\ell}}^{n_1+n_2}$ we have a natural
  isomorphism
  \[
  \gamma^*(\mb{V}_{g_1 + g_2,n_1+n_2,\ovl}) \sr{\sim}{\to}
  \bigoplus_{\mu \in P_{\ell}} \mb{V}_{g_1,n_1+1,\ovl^1\mu} \otimes
  \mb{V}_{g_2,n_2 + 1,\ovl^2\mu^*}
  \]
  where $\ovl^1\mu := (\l_1,\dots,\l_{n_1},\mu)$ and $\ovl^2  \mu^*
  := (\l_{n_1+1},\dots,\l_{n_1+n_2},\mu^*)$.
\item Let $\gamma:\omc_{g-1,n+2} \to \mgnb$ be the
  gluing morphism where we glue the last two marked points to each
  other. Then for $\ovl \in {P_{\ell}}^n$ we have an isomorphism
  \[
  \gamma^*(\mb{V}_{g,n,\ovl}) \sr{\sim}{\to} \bigoplus_{\mu \in
    P_{\ell}} \mb{V}_{g-1,n+2, \ov{\l\mu}}
  \]
  where $\ov{\l\mu} := (\l_1,\dots,\l_n,\mu,\mu^*)$.
\end{enumerate} 
\end{prop}

\begin{proof}
  (1) follows from Lemma \ref{lem:contract} and (2), (3) follow from
  Proposition \ref{prop:fact}
\end{proof}

In this paper we shall mostly be concerned with the case $g=0$. In
this case, $\mnb$ is a smooth projective variety and we shall from now
on denote the locally free sheaf $\mb{V}_{0,n,\ovl}$ on $\mnb$ simply
by $\vl$, its determinant line bundle by $\dl$ and its rank by
$r_{\ovl}$.

\begin{lem}
\label{lem:gen} 
All the $\vl$ are generated by their global sections, therefore so are
all the $\dl$. In particular, the $\dl$  are nef line bundles.
\end{lem}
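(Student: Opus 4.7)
The plan is to exhibit a natural surjection from a trivial bundle onto $\vl$, from which both assertions will follow formally.

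The key input, essentially built into the construction of the sheaves of covacua in \cite{TUY, sorger-verlinde, looijenga-conformal}, is that for any family $\pi: \mc{C} \to S$ as in \S\ref{subsec:conf} there is a canonical $\mc{O}_S$-linear surjection
\[
V_{\ovl} \te \mc{O}_S \thra V_{\mc{C}}(\ov{p}, \ovl),
\]
where $V_{\ovl} := V_{\l_1} \te \cdots \te V_{\l_n}$. At a geometric fiber over $s \in S$, this is the composite of the inclusion of $V_{\ovl}$ as the top finite-dimensional $\mg{g}$-subspace of the integrable $\wh{\mg{g}}_\ell$-module $\mc{H}_{\ovl,\ell}$ with the defining quotient by the action of $\wh{\mg{g}}(\mc{C}_s \setminus \ov{p}(s))$; in the smooth-curve case, surjectivity is a standard consequence of the fact that $V_{\ovl}$ generates $\mc{H}_{\ovl,\ell}$ as a $\wh{\mg{g}}_\ell$-module. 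Descending this surjection along the universal family over $\mnb$ yields $V_{\ovl} \te \mc{O}_{\mnb} \thra \vl$, and since the source is trivially generated by its global sections, so is $\vl$.

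For the determinant, I would use the standard observation that if a rank-$r$ bundle $E$ is generated by sections $s_1, \dots, s_N$, then the wedges $s_{i_1} \wedge \cdots \wedge s_{i_r}$ for all $r$-subsets generate $\det E$: at any point some $r$-subset of the $s_i$ spans the fiber, and the corresponding wedge is a section of $\det E$ nonvanishing there. Nefness of a globally generated line bundle is then immediate: for any irreducible curve $C \subset \mnb$, a section of $\dl$ not vanishing identically on $C$ exhibits $\dl|_C$ as an effective divisor class, hence $\dl \cdot C \geq 0$.

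The main obstacle is verifying fiberwise surjectivity of $V_{\ovl} \te \mc{O}_S \to V_{\mc{C}}(\ov{p}, \ovl)$ at boundary points of $\mnb$, i.e.\ at nodal stable curves, where the quotient module is not given by a single smooth-curve description. For this one invokes the factorization rule, which identifies the fiber at such a curve with a direct sum of tensor products of covacua on the smooth components of the normalization (with extra weights at the preimages of each node), and then reduces to the smooth-curve case on each component to conclude that $V_{\ovl}$ still surjects onto the whole quotient.
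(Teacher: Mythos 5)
Your overall strategy --- exhibit a surjection from a trivial bundle onto $\vl$ and deduce global generation of the determinant and nefness formally --- is exactly the paper's, and those formal deductions are fine. The problem is the step you yourself flag as ``the main obstacle'': surjectivity of $V_{\ovl}\te\mc{O}_S \to V_{\mc{C}}(\ov{p},\ovl)$ over nodal fibres. Your proposed fix via factorization does not go through as stated. The factorization isomorphism identifies the fibre at $C = C_1\cup_x C_2$ with $\bigoplus_{\mu} V_{C_1}(\ovl',\mu)\te V_{C_2}(\ovl'',\mu^*)$, and the smooth-curve case gives you surjectivity of $V_{\ovl'}\te V_{\mu}\thra V_{C_1}(\ovl',\mu)$ and likewise on $C_2$. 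But the natural map from $V_{\ovl}$ does not factor through $\bigoplus_\mu V_{\ovl'}\te V_\mu\te V_{\ovl''}\te V_{\mu^*}$: tracing the factorization isomorphism, the image of $v\in V_{\ovl}$ in the $\mu$-summand is obtained by inserting the gluing (sewing) element, which lives in a completion of $\mc{H}_\mu\te\mc{H}_{\mu^*}$, not in $V_\mu\te V_{\mu^*}$. So component-wise surjectivity of the maps from the finite-dimensional multiplicity spaces does not transfer to surjectivity of the map from $V_{\ovl}$; an additional (nontrivial) argument about the image of $V_{\ovl}$ tensored with the gluing element would be needed. A secondary quibble: in the smooth case, surjectivity is not a consequence merely of $V_{\ovl}$ generating $\mc{H}_{\ovl,\ell}$ as a $\wh{\mg{g}}$-module --- that holds in every genus, whereas the map fails to be surjective for $g>0$ (cf.\ Remark \ref{rem:allg}); the genus-$0$ input is the existence of suitable meromorphic functions.

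The paper's proof avoids factorization entirely. It quotes Ueno's surjectivity statement for smooth genus-$0$ fibres and observes that the only geometric input in that proof is: for each marked point $x$ there is a function $f\in\mc{O}(C- x)$ with a simple pole at $x$. Such functions exist on any semistable curve of genus $0$ (a tree of $\P^1$'s: take the obvious function on the component containing $x$ and extend by locally constant functions across the tree), so the identical downward-induction argument --- rewriting the action of $X\te t^{-n}$ at $x$ in terms of actions at the other points where $f$ is regular, using that $X\te f$ kills covacua --- proves surjectivity of $(\te_i V_{\l_i})_{\mg{g}}\te\mc{O}_S\thra V_{\mc{C}}(\ov{p},\ovl)$ uniformly over all of $\mnb$, including the boundary. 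If you want to salvage your write-up, replace the factorization reduction with this observation.
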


\begin{proof}
  With notation as in the beginning of this section, by the
  construction of the sheaves of conformal blocks there is always a
  natural map from the constant vector bundle on $S$ with fibre
  $(\otimes_i V_{\l_i})_{\mg{g}}$ to $V_{\mc{C}}(\ov{p}, \ovl)$ which
  is induced by the inclusion of $V_{\l}$ in $\mc{H}_{\l}$. It follows
  from \cite[Proposition 3.5.1]{ueno-conformal} that this map is
  surjective if the fibres of the family are smooth curves of genus
  $0$. The proof only uses the fact that given any point $x \in
  \P^1(k)$ there exists $f \in \Gamma(\P^1 - \{x\}, \mc{O}_{\P^1 -
    \{x\}})$ which has a simple pole at $x$. If $C$ is a semi-stable
  curve of genus $0$ and $x \in C(k)$ is a smooth point, then there
  exists $f \in \Gamma(C - \{x\},\mc{O}_{C - \{x\}})$ having a simple
  pole at $x$, so the proof of \cite[Proposition
  3.5.1]{ueno-conformal} extends to the case where the fibres are
  semi-stable curves of genus $0$.
\end{proof}

\begin{rem} 
  \label{rem:allg}
  The map from $(\otimes_i V_{\l_i})_{\mg{g}}$ to $V_{\mc{C}}(\ov{p},
  \ovl)$ is in general not surjective if the fibres have genus $g>0$.
  In fact, in this case it follows from Theorem \ref{thm:m11} that the
  determinants of conformal blocks on $\mnb$ are often
  not even nef.
\end{rem}

Recall that $\mnb \bs \mn$ is a divisor with simple normal
crossings, so $\mnb$ has a natural stratification by smooth
strata. Keel \cite{keel-m0nbar} has shown that the Chow groups of
$\mnb$ are generated by the classes of the closures of the
(irreducible componenents) of the strata and the Chow groups are equal
to the Chow groups modulo numerical equivalence.  Thus, one way of
computing the class of $\dl$ in $\pic(\mnb)$ is by computing
the degree of $\vl$ restricted to the one dimensional strata,
called vital curves in \cite{keel-mckernan}.

Classes of vital curves modulo numerical equivalence correspond to
partitions $\{1,\dots,n\} =  \sqcup_{k=1}^4 N_k$, with
$|N_k| = n_k > 0$.  Given such a partition, let $F$ be the family of
$n$-pointed genus $0$ curves given by gluing, for each $k=1,2,3,4$, a
fixed $n_k + 1$-pointed curve $C_k$ of genus $0$ at the last marked
point along the $k$-th section of the universal family over
$\mfb$. If $n_k = 1$ for some $k$, we do not glue any curve at
the $k$'th section. This gives rise to a family of stable $n$-pointed
curves of genus $0$ such that the class in the Chow group of the image
of $\mfb$ in $\mnb$ by the classifying map for this
family is independent of the choice of the glued curves.

Given a partition as above and $\ov{\mu} = (\mu_1,\mu_2,\mu_3,\mu_4)
\in {P_{\ell}}^4$, let $\ov{\l}\mu_k^*$ be the $n_k + 1$-tuple
$(\l_{i_1},\dots,\l_{i_{n_k}},\mu_k^*)$ where $N_k = \{i_1,i_2,\dots,
i_{n_k}\}$.  Since in the construction of $F$ the attached curves do
not vary in moduli, it follows from the factorisation formula applied
four times that we have 
\begin{prop}
\label{prop:deg} 
\[
\deg(\mb{V}_{\ovl}|_F) = \sum_{\ov{\mu} \in {P_{\ell}}^4}
\deg(\mb{V}_{\ov{\mu}})\ \prod_{k=1}^4 r_{\ov{\l}\mu_k^*} \ .
\]
\end{prop}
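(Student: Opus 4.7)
The plan is to apply the factorisation formula of Proposition \ref{prop:fact} once at each of the four nodes of the family $F$ and then use the fact that conformal blocks on a disjoint union of curves split as a tensor product.

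First, I would set up the family more precisely: $F$ is obtained by taking the disjoint union of the four trivial families $C_k \times \ov{M}_{0,4} \to \ov{M}_{0,4}$ together with the universal family $\mc{D} \to \ov{M}_{0,4}$ and gluing the section at the $(n_k+1)$-th marked point of $C_k$ to the $k$-th section of $\mc{D}$. At each point of $\ov{M}_{0,4}$ the resulting curve thus has exactly four nodes.

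Next, I would apply Proposition \ref{prop:fact} four times, once for each of these four disjoint nodes, which is legitimate because the factorisations at disjoint nodes commute. Each application introduces a sum over a weight $\mu_k \in P_{\ell}$ placed on one side of the node and its dual $\mu_k^*$ placed on the other. After the four applications, $\mb{V}_{\ovl}|_F$ is identified with
\[
\bigoplus_{\ov{\mu}\in P_{\ell}^{4}} V_{C_1\sqcup C_2 \sqcup C_3 \sqcup C_4 \sqcup \mc{D}}\bigl((p_i),(q_k),(\l_i),(\mu_k^*),(\mu_k)\bigr),
\]
where now the underlying family is smooth (the disjoint union of the four constant families $C_k$ with $\mc{D}/\ov{M}_{0,4}$). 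Because conformal blocks on a disconnected curve factor as the tensor product of conformal blocks on each connected component, this direct sum becomes
\[
\bigoplus_{\ov{\mu}\in P_{\ell}^{4}} \mb{V}_{\ov{\mu}}\ \otimes\ \bigotimes_{k=1}^{4} V_{C_k}(\ovl_{\mu_k^*}),
\]
where $\mb{V}_{\ov{\mu}}$ is the bundle of conformal blocks for $\ov{\mu}$ on $\ov{M}_{0,4}$, and each $V_{C_k}(\ovl_{\mu_k^*})$ is a trivial vector bundle on $\ov{M}_{0,4}$ of rank $r_{\ovl_{\mu_k^*}}$ (since $C_k$ with its marked points does not vary in moduli).

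Finally, I would take degrees. Since tensoring with a trivial vector bundle of rank $m$ multiplies the degree by $m$, and the degree of a direct sum is the sum of degrees, we obtain
\[
\deg(\mb{V}_{\ovl}|_F)=\sum_{\ov{\mu}\in P_{\ell}^{4}}\deg(\mb{V}_{\ov{\mu}})\prod_{k=1}^{4} r_{\ovl_{\mu_k^*}},
\]
which is the desired formula. The only mildly delicate point, and the one I would check most carefully, is the identification of conformal blocks on a disjoint union of pointed curves with the tensor product of conformal blocks on each component, together with the compatibility of the four factorisation isomorphisms at disjoint nodes; both are standard consequences of the construction, but they are what make the decomposition on the right-hand side rigorous.
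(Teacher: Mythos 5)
Your argument is correct and is essentially the paper's own proof, which simply invokes the factorisation formula four times at the four nodes together with the observation that the attached curves $C_k$ do not vary in moduli, so that each factor $V_{C_k}(\ovl_{\mu_k^*})$ is a trivial bundle of rank $r_{\ovl_{\mu_k^*}}$ whose effect on the degree is multiplication by that rank. The splitting of conformal blocks over a disconnected curve that you flag as the delicate point is already built into the framework of \S\ref{subsec:conf}, where the fibres are explicitly not assumed to be connected, so nothing further is needed.
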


The ranks of the bundles of conformal blocks can be computed from the
Verlinde formula or inductively from the case $n=3$, so it follows
that to compute the determinant of $\vl$, equivalently its degree on
any vital curve, it suffices to consider the  $n=4$ case.

\begin{rem} 
\label{rem:deg}
For arbitrary $g$, the closures of the $1$-dimensional strata of
$\overline{\mr{M}}_{g,n}$ are called F-curves and there is a simple
way of enumerating all these \cite[Theorem 2.2]{GKM}. It is known that
their classes generate $N_1(\mgnb)_{\Q}$, the space of $1$-cycles
modulo numerical equivalence, so it follows from the description of
F-curves and the factorisation formula that to compute
$c_1(\mb{V}_{g,n,\ovl})$ for all $g,n$ it suffices to be able to
compute it in the cases $(g,n) = (0,4)$ or $(1,1)$.
\end{rem}

\subsection{The KZ/Hitchin/WZW connection} 
\label{subsec:kzconn} 

The sheaves of conformal blocks associated to a smooth family of
curves have a natural flat projective connection called the WZW or
Hitchin connection; this is one of the main ingredients in our
computation of the Chern classes. However, for that purpose it is
important to lift this to a flat connection. For arbitrary $g$ this is
not always possible globally but it is for $g=0$ or $1$. The lift is
not canonical and depends on some auxiliary choices and our first goal
in this section is to understand this dependence explicitly.

For the sake of simplicity, we assume that $S$ is affine and $A =
\Gamma(S,\mc{O}_S)$ as before. Then the first choice is that of formal
parameters, \emph{i.e.}, the isomorphisms $\eta_i$, along each section
of the family of curves $\pi:\mc{C} \to S$ with an $n$-tuple of
sections $\ov{p}$. The second is that of a suitable symmetric
bidifferential $\omega$ on the family, see
\cite[p.~16]{ueno-conformal}. Given this data, the action of a vector
field $D$ on $S$ on is defined as follows \cite[Section
5.1]{ueno-conformal}: We first lift $D$ to a vector field, also
denoted by $D$, on $\mc{C} \bs \cup_i p_i(S)$. For each $i$, we write
$(\eta_i^{-1})^*(D) = D_{i,hor} + D_{i,vert}$ where $D_{i,hor}(\xi) =
0$ and $D_{i,vert}$
kills $A$.  Then we have the Sugawara operators $T(D_{i,vert})$ which
act on $\mc{H}_{\l_i}$ and hence on $\mc{H}_{\ovl}$ by acting on the
$i$'th factor. Morevoer, the bidifferential $\omega$ gives elements
$a_{\omega,i}(D_{i,vert}) \in A$ \cite[(5.4)]{ueno-conformal} whose
sum over all $i$ we write simply as $a_{\omega}(D_{vert})$; we do not
recall the full definition here but note that it is of the form
$\tfrac{\ell \dim \mg{g}}{\ell + h^{\vee}}f$, with $f \in A$ 
not depending on any data associated to $\mg{g}$.

The action of $D$ on $V_{\mc{C}}(\ov{p},\ovl)$, which we denote by
$\nabla_D$, is induced by the action on $\mc{H}_{\ovl} \otimes_k A$
given by
\[
\nabla_D(v \otimes f) = D_{hor}(v \otimes f) + \Bigl (\sum_{i=1}^n
T(D_{i,vert})(v) \Bigr )\otimes f - v \otimes a_{\omega}(D_{vert}) \cdot f \
,
\]
where $v \in \mc{H}_{\ovl}$ and $f \in A$ and the action of $D_{hor}$
is given by coordinatewise differentiation, \emph{i.e.}, it acts on the
$A$ component of $\mc{H}_{\ovl} \otimes_k A$. We note that the
energy-momentum tensor $T$, whose definition we recall below, as well
as the operator $a_{\omega}$ depend on the choice of
parameters. Moreover, the first term also depends on this choice since
this is implicit in the tensor product decomposition.

We now choose some different isomorphisms $\eta_i'$ and compute how
the connection changes; in the following we denote all terms defined
using these new isomorphisms with a $'$. By definition, the difference
of the two actions $\nabla_D - \nabla_{D}'$ is given by the operator
\[
D_{hor} - D_{hor}' + T(D_{vert}) - T'(D_{vert}')
-(a_{\omega}(D_{vert}) - a_{\omega}'(D_{vert}')) \ ,
\]
where to simplify the notation we have suppressed the sum over $i$.
Adding and subtracting suitable terms, this is equal to
\[
D_{hor} -D_{hor}' + T(D_{vert} - D_{vert}') + T(D_{vert}') -
T'(D_{vert}') -(a_{\omega}(D_{vert} - D_{vert}') +
a_{\omega}(D_{vert}') - a_{\omega}'(D_{vert}')) \ .
\]
It follows from the discussion before Lemma 2.13 of
\cite{looijenga-conformal}, more precisely by using the version for
integrable representations \cite[Lemma 2.13]{ueno-conformal} rather
than the Fock representation,
that
\[
T(D_{vert} - D_{vert}') = -(D_{hor} - D_{hor}') + U
\]
where $U$ is the operator acting on $\mc{H}_{\ovl} \otimes_k A$ by the
scalar (see below), giving the action of $T(D_{vert} - D_{vert}')$ on
$\otimes_i V_{\l_i} \subset \mc{H}_{\ovl}$.

Using the definition in \cite[(5.4)]{ueno-conformal} one sees that
$a_{\omega}(D_{vert} - D_{vert}') = 0$. The change of variables
formula for the energy-momentum tensor $T$
(\cite[8.2.2]{frenkel-benzvi} or Theorem 3.4.3 (2) of
\cite{ueno-conformal}) and for the projective connection associated to
the bidifferential $\omega$ (\cite[Theorem 1.115]{ueno-conformal})
implies that both these terms change by the Schwarzian derivative when
they are applied to the same vector field but different coordinates
are used. Thus the changes in these terms cancel out.

It follows that the difference in the connections is given simply by
the operator $U$.  As a function of $\ovl$ it depends only on the
$c(\l_i)$. To see this, we first recall the definition of the
energy-momentum tensor $T$.

We use the notation
\begin{align*}
X(n) & = X \otimes \xi^n, \ X \in \mg{g} \\
X(z) & = \sum_{n\in \Z} X(n)z^{-n-1}
\end{align*}
The normal ordering $: :$ is defined by
\begin{equation*}
:X(n) Y(m): \ \  = 
\begin{cases} 
X(n)Y(n) & n<m,\\
\tfrac{1}{2}(X(n)Y(m) + Y(m)X(n)) & n=m, \\
Y(m)X(n) & n>m \ .
\end{cases} 
\end{equation*}
Put
\[
L_n = \frac{1}{2(\ell + h^{\vee})}\sum_{m \in \Z}\sum_{\kappa}:X_{\kappa}(m)X_{\kappa}(n-m):
\]
and
\[
T(z) = \sum_{n \in \Z}L_nz^{-n-2}\ .
\]
The $L_n$'s are called Virasoro operators and act on $\mc{H}_{\l}$.


For $D = f(z)\frac{d}{dz}$ with $f \in k((z))$, put
\[
T(D) = \res_{z=0}(T(z)f(z)dz) \ .
\]
Then $T(D)$ acts on $\mc{H}_{\l}$. 

If $f = \sum_{j=1}^{\infty} a_jz^j$ then $T(D) = \sum_{j=1}^{\infty}
a_jL_{j-1}$. The formula for the $L_n$ shows that this preserves
$V_{\l} \subset \mc{H}_{\l}$ and acts on it as the operator
$\tfrac{a_1\Omega}{2(\ell + h^{\vee})}$, hence by multiplication by
$\tfrac{a_1c(\l)}{2(\ell + h^{\vee})}$.

\smallskip

We summarize the above discussion in 
\begin{lem} \label{lem:change} 
\begin{enumerate}
\item For each $i$, $D_{i,vert} - D_{i,vert}'$ is of the form
  $f_i(\xi)\tfrac{d}{d\xi}$ with $f(\xi) = \sum_{j=1}^{\infty}a_j^i\xi^j$
  and $a_j^i \in A$.
\item $\nabla_D - \nabla_D'$ is given by multiplication by 
 \[(2(\ell + h^{\vee}))^{-1}\sum_i
  a_1^ic_1(\l_i) \ .
  \]
\end{enumerate}
\end{lem}
\begin{proof}
The first part follows easily from the definitions and the second from
the discussion preceding the statement of the lemma.
\end{proof}

The connection on the sheaf of conformal blocks has logarithmic
singularities along the boundary divisors for a degenerating family of
smooth curves. Our formula for the Chern classes will be obtained from
considerations of the residues of the connection along the boundary. These
are known to have a simple description which we now recall:

Let $C'$ be a smooth projective curve with distinct points $q_1,q_2$
in $C'(k)$ and let $C$ be the nodal curve obtained by gluing these
points together. We assume that $C$ is connected. Given local
parameters $\xi_{q_1}$ and $\xi_{q_2}$ at $q_1$ and $q_2$ respectively,
there exists a natural smoothening $\pi:\mc{C} \to S$ of $C$. Here $S
= \spec(k[[t]])$, $\pi$ is proper and flat, the special fibre is $C$
and the generic fibre is smooth. Moreover, if $q$ denotes the node on
$C$, the formal completion of $C - \{q\}$ in $\mc{C}$ is naturally
isomorphic to the formal completion of $C - \{q\}$ in $C
-\{q\}\times_k \spec(k[[t]])$. In particular, any smooth point of $C(k)$
extends naturally to a section of $\pi$ and a local parameter for such
a point extends to a local equation for the corresponding section.
This smoothening can be constructed easily using deformation theory
and the Grothendieck existence theorem; we shall give a more explicit
description in the cases we actually use.

Now suppose $p_1,p_2,\dots,p_n$ are smooth rational points on the
special fibre $C$ which we extend to sections as described above and
let $\ovl \in {P_{\ell}}^n$.  Let $D$ denote the vector field $t
\tfrac{d}{dt}$ on $S$. We lift $D$ to a rational vector field on
$\mc{C}$ which is regular outside the sections. It follows from the
calculations in \cite[Section 4]{looijenga-conformal} that the
Sugawara action of $D$ induces an operator on
$V_{\mc{C}}(\ov{p},\ovl)$ which restricts to the operator on
$V_C(\ov{p},\ovl)$ given as follows:
\begin{prop} 
\label{prop:res}
Under the natural identification of $V_C(\ov{p},\ovl)$ with
$\oplus_{\mu \in P_{\ell}}V_{C'}(\ov{pq},\ov{\lambda \mu})$ given by
the factorisation formula, the operator acts on each summand
$V_{C'}(\ov{pq},\ov{\lambda \mu})$ by multiplication by
$\tfrac{c(\mu)}{2(\ell + h^{\vee})} + b$ with $b$ being given by the
Sugawara action of $D_0$, the restriction of $D$ to $C'$.
\end{prop}
Note that since $D_0$ is a vertical vector field, $b$ does not depend
on $\mu$ and can be computed using a bidifferential, see \cite[Lemma
5.1]{ueno-conformal}. In \cite[Theorem 4.5]{looijenga-conformal} it
appears to be claimed that $b$ is always zero but we do not see
why. However, we will see that it is zero in the cases we consider.

Note that in the above we have not included the $a_{\omega}$ term in
the action. This will vanish in the case $g=0$ that we consider but
not for $g>0$.

\section{The case $g=0$}
\label{sec:g0} 

In this section we derive our main formulae for the Chern classes in
genus $0$ by explicitly computing the residues of the KZ connection
on $\mb{V}_{\ovl}$ along all the boundary divisors in $\mnb$.

\subsection{} 
\label{subsec:kzres} 

In order to carry out our computations we shall need an explicit
description of $\mn$ and explicit equations for the $n$ sections.
Therefore, we identify $\mn$ with the open subset of $\A^{n-3}$
given by
\[
\{(z_1,z_2,\dots,z_{n-3}) \in \A^{n-3} \ |\text{ $\ z_i \neq 0,1$ for all
  $i$ and $z_i \neq z_j$ for $i \neq j$}\} \ .
\]
The universal family of marked curves is given by $\mn\times
\P^1$ with the $n$ ordered sections given by the $n$ morphisms
$\mn \to \P^1$, $(z_1,z_2,\dots,z_{n-3}) \mapsto
z_1,z_2,\dots,z_{n-3},0,1,\infty$.  Letting $x$ be the coordinate on
$\P^1$, the sections are given by the equations $x
=z_1,z_2,\dots,z_{n-3},0,1$ and $1/x = 0$.

For the rest of this section, we fix a simple Lie algebra $\mg{g}$, a
level $\ell$ and $\ovl = (\l_1,\l_2,\dots,\l_n) \in {P_{\ell}}^n$. If
we use the bidifferential $\omega(x,y) = \frac{dxdy}{(x-y)^2}$ on
$\P^1 \times \P^1$, by \cite[Section 5.1]{ueno-conformal} we get a
well defined flat connection on $\mb{V}_{\ovl}$ restricted to $\mn$
with regular singularities which we call the KZ connection.  In fact,
since all equations for the sections are fractional linear and the
Schwarzian derivative of such a function is $0$, it follows from the
definition \cite[(5.40)]{ueno-conformal} that
$a_{\omega,i}(D_{i,vert}) = 0$ for all $i$ for any vector field $D$ on
$\mn$.
To make things precise, given a vector field $D$ on $\mn$ we will
always lift it to a vector field on $\mn \times \P^1$ so that it is
constant on the fibres.

We now compute the residues of the KZ connection along the boundary
divisors in $\mnb$.  We will first choose good local coordinates in
order to be able to apply Proposition \ref{prop:res} and then compute
the change in the connection, hence the change in the residue, by
applying Lemma \ref{lem:change}. All the local coordinates that we use
will be fractional linear so it follows from \cite[Lemma
5.1]{ueno-conformal} that the constant $b$ occurring in Proposition
\ref{prop:res} is always zero. What we need then is to compute the
functions $a_1^i$ occurring in Lemma \ref{lem:change}.

Recall that the boundary divisors are parametrised by partitions
$\{1,2,\dots,n\} = A \sqcup B$ with $|A|, |B| \geq 2$. In the
coordinates above, they correspond to the exceptional divisors in the
blowup of the following loci:
\begin{enumerate}
\item For $\emptyset \neq S \subset \{1,2,\dots,n-3\}$, the locus in
  $\A^{n-3} \supset \mn$ given by the equations $\{z_i = 0\}_{i
    \in S}$.
\item For $\emptyset \neq S \subset \{1,2,\dots,n-3\}$, the locus in
  $\A^{n-3} \supset \mn$ given by the equations $\{z_i = 1\}_{i
    \in S}$.
\item For $\emptyset \neq S \subset \{1,2,\dots,n-3\}$, the locus in
  $(\P^1)^{n-3} \supset \A^{n-3} \supset \mn$ given by the
  equations $\{1/z_i = 0\}_{i \in S}$.
\item For $S \subset \{1,2,\dots,n-3\}$, with $|S| \geq 2$, the locus
  in $\A^{n-3} \supset \mn$ given by the equations $\{z_i =
  z_j\}_{i,j \in S}$.
\end{enumerate}

Globally, each of these divisors is the image of the embedding of
$\ov{\mr{M}}_{0,r+2} \times \ov{\mr{M}}_{0,n-r}$ into $\mnb$ by a
suitable gluing map, where $r = |S|$ in the first three cases and $r =
|S| -1$ for the last case. By Proposition \ref{prop:fact}, the
restriction of $\vl$ to each of these divisors is a sum $\oplus_{\mu
  \in P_{\ell}} \V_{\ovl'{\mu}} \otimes \V_{\ovl''{\mu^*}}$, where
${\ovl'{\mu}}$ (resp.~${\ovl''{\mu^*}}$) is obtained by restricting
$\ovl$ and attaching $\mu$ (resp.~$\mu^*$) at the glued point of the
first (resp.~second) component. The residue of the KZ connection along
these divisors---this is an endomorphism of the restricted
bundle---preserves this direct sum decomposition and moreover acts on
each summand by a scalar which we shall now determine.

In what follows, we shall denote $\del / \del z_i$ by $\del_i$ and
$\del / \del x$ by $\del_x$.

\subsubsection{} 
\label{sec:kzres1}

Boundary divisors of type (1): For ease of notation, we
shall assume $S = \{1,2\dots,r\}$ since the general case follows from
this by permuting coordinates.  An open set in the blowup is given by
$U \cong \A^{n-3}$ with coordinates
$t,w_2,\dots,w_r,z_{r+1},\dots,z_{n-3}$ with the map to $\mn$ given by
$(t,w_2,\dots,w_r,z_{r+1},\dots,z_{n-3}) \mapsto
(t,tw_2,\dots,tw_r,z_{r+1},\dots,z_{n-3})$ and the exceptional divisor
$B$ is given by $t=0$. The universal family in a neighbourhood of the
generic point of the exceptional divisor is given by blowing up the
locus given by $t = x = 0$ in $U \times \P^1$, so over the generic
point there are two components isomorphic to $\P^1$ meeting
transversally in a single point.

Let $y := t/x$. The $n$ sections defined over $\mn$ extend to
sections of this family as follows, where $w_1:=1$:
\begin{itemize}
\item The sections given by $x=z_{r+1}\dots,z_{n}$, $x=1$ and $1/x =
  0$ are given by the same equations.
\item The section given by $x = 0$ is given by $1/y= 0$.
\item The sections given by $x = z_i$, $1\leq i \leq r$, are given by
  $y =w_i^{-1}$.
\end{itemize}

Replacing $k$ by $k(w_2,\dots,w_r,z_{r+1},\dots,z_{n-1})$, it follows
from Proposition \ref{prop:res} that the residue of the
connection obtained using the above coordinates along this divisor and
the new equations for the sections is given by the endomorphism of
$\vl|_{B}$ which acts by multiplication by $c(\mu)/2(\ell + h^{\vee})$
on the summand $\V_{\ovl'{\mu}} \otimes \V_{\ovl''{\mu^*}}$.

Let $D= \del / \del t$ and lift it to a derivation on the universal
family over $U$ with trivial action in the fibre direction.  To
compare the connection in the new coordinates as above with the KZ
connection we must compute the functions $a_1^i$, $i=1,2,\dots,n$
occurring in Lemma \ref{lem:change}.  As before, we write $D =
D_{vert} + D_{hor} = D_{vert}' + D_{hor}'$ where the $'$ denotes the
decomposition with respect to the new coordinates.

\begin{itemize}
\item For the sections given $x=z_{r+1}\dots,z_{n}$, $x=1$ and $1/x =
  0$, $D_{vert} = D_{vert}' = 0$, hence $a_1 = 0$.
\item For the section given by $x =0$ (this is the $n-2$'nd according
  to our numbering), we have $D_{vert} = 0$ but $D_{vert}' =
  -(x/t)\del_x$ so $a_1 = 1/t$.
\item For the sections with equations $x = z_i$, $1 \leq i \leq r$, by
  substituting $z_i = tw_i$ we see that $D_{vert} = -w_i\partial_x$
  whereas $(\del_t + (x/t)\del_x)((t/x) - w_i^{-1}) = 0$ so $D_{vert}'
  = -(x/t)\del_x = -(\frac{x-z_i}{t} + w_i)\partial_x$. It follows
  that $a_1 = 1/t$.
\end{itemize}

Adding up all the terms, we see that the residue of the KZ connection
along this divisor is the endomorphism of $\vl|_B$ which acts by
multiplication by 
\begin{equation}
\label{eq:kzres1}
\frac{c(\mu) - c(\l_{n-2}) - \sum_{i \in S} c(\l_i)}{2(\ell + h^{\vee})}
\end{equation}
on the summand $\V_{\ovl'{\mu}} \otimes \V_{\ovl''{\mu^*}}$ for each
$\mu \in P_{\ell}$.

\subsubsection{} 
\label{sec:kzres2}

Boundary divisors of type (2): The change of coordinates
given by $z_i \mapsto 1 - z_i$, $i =1,2,\dots,n-3$ and $x \mapsto 1
-x$ on $\mn\times \P^1$ preserves the equations of all the sections
except for $x=0$ and $x=1$ which it interchanges and $1/x = 0$ which
becomes $1/(1-x) = 0$. Moreover, it sends the locus given by $\{z_i =
1\}_{i \in S}$ to the locus given by $ \{z_i=0\}_{i \in S}$. Since
$\del_i (1/x) = \del_i (1/(1-x)) = 0$ for all $i$, the KZ connection
does not change if we replace the equation $1/x = 0$ by the equation
$1/(1-x) = 0$. Using \eqref{eq:kzres1} we then see that the residue of
the KZ connection along a divisor $B$ of type (2) is the endomorphism
of $\vl|_B$ which acts by multiplication by 
\begin{equation}
\label{eq:kzres2}
\frac{c(\mu) - c(\l_{n-1}) - \sum_{i \in S} c(\l_i)}{2(\ell + h^{\vee})}
\end{equation}
on the summand $\V_{\ovl'{\mu}} \otimes \V_{\ovl''{\mu^*}}$ for each
$\mu \in P_{\ell}$.

\subsubsection{} 
\label{sec:kzres3}

Boundary divisors of type (3): The change of coordinates given by $z_i
\mapsto 1/z_i$, $i=1,2,\dots,n-3$ and $x \mapsto 1/x$, preserves the
sections given by $x = z_i$, $i=1,2,\dots,n-3$ and $x = 1$ and
switches the sections given by $x = 0$ and $1/x = 0$. Moreover, it
maps the locus given by the equations $\{1/z_i = 0\}_{i\in S}$ to the
locus given by the equations $\{z_i=0\}_{i \in S}$ so we would like to
use the computation for type (1) boundary divisors to compute the
residue.  However, since the equations of the sections are not
preserved we must again compute the change in the connection caused by
the change of coordinates.

For $i \in \{1,2,\dots,n-3\}$ set $D = \del_i$. The old as well as the
new equations for all the sections except for the $i$'th one are
killed by $\del_i$, so $D_{vert} = D_{vert}' = 0$ along them.  For the
$i$'th section we have $D_{vert} = -\del_x$ and since $(\del_i -
(x^2/z_i^2)\del_x)(1/x - 1/z_i) = 0$ we have $D_{vert}' =
-x^2/z_i^2\del_x$. Since
\[
-\frac{x^2}{z_i^2} = - \frac{(x-z_i)^2 +2z_i(x-z_i)}{z_i^2} - 1
\]
it follows that $a_1 = 2/z_i$. 

To compute the difference in the residue, we must compute the
difference in the action of the vector field $\del / \del t$ in the
notation of \S \ref{sec:kzres1}.  Since $\del / \del t = \sum_{i \in
  S}w_i \del _i$ it follows from Lemma \ref{lem:change} that this is
given $\frac{\sum_{i \in S} c(\l_i)}{t(\ell + h^{\vee})}$.

It then follows from \eqref{eq:kzres1} that the residue of the KZ
connection along this divisor is the endomorphism of $\vl|_B$ which
acts by multiplication by 
\begin{equation}
\label{eq:kzres3}
\frac{c(\mu)  - c(\l_n) - \sum_{i \in S}c(\l_i)}{2(\ell + h^{\vee})}
+ \frac{\sum_{i \in S} c(\l_i)}{\ell + h^{\vee}}
= \frac{c(\mu) - c(\l_n) + \sum_{i \in S} c(\l_i)}{2(\ell + h^{\vee})}
\end{equation}
on the summand $\V_{\ovl'{\mu}} \otimes \V_{\ovl''{\mu^*}}$ for each
$\mu \in P_{\ell}$.

\subsubsection{} 
\label{sec:kzres4}

Boundary divisor of type (4): Again, for ease of notation we shall
suppose $S = \{1,2,\dots,r\}$ for some $r$, $2 \leq r \leq n-3$. Then
an open subset of the blowup may be identified with $\A^{n-3}$ with
coordinates $s,t,w_3,\dots,w_r,z_{r+1},\dots,z_{n-3}$ so that the map
to $\mn$ is given by $(s,t,w_3,\dots,w_r,z_{r+1},\dots,z_{n-3})
\mapsto (s, s + t, s+ tw_3, \dots, s+ tw_r, z_{r+1}, \dots,z_{n-3})$
and the exceptional divisor $B$ is given by $t=0$.  The universal
family in a neighbourhood of $B$ is then given by blowing up the locus
given by $t = x-s = 0$, where $x$ is the coordinate on $\P^1$ as
before.

Let $y = t/(x-s)$. All $n$ sections extend to sections of the universal
family in a neighbourhood of the generic point of $T$ with equations
given as follows, where $w_2 := 1$:
\begin{itemize}
\item $x - z_1 = 0$ is replaced by $1/y=0$.
\item $x - z_i=0$ is replaced by $y - w_i^{-1}=0$ for $2\leq i \leq r$.
\item The equations for the remaining sections are unchanged.
\end{itemize}

As in \S \ref{sec:kzres1} we now decompose $\del_t$ as $D_{vert} +
D_{hor}$ and $D_{vert}' + D_{hor}'$ and compare the two:
\begin{itemize}
\item For  $x - z_1 = 0$, $D_{vert} = 0$ and $D_{vert}' = -(x-z_1)/t\del_x$ so
  $a_1 = 1/t$.
\item For $x - z_i=0$, $2\leq i \leq r$, using $z_i = s+tw_i$ we get $D_{vert} = -w_i\del_x$ and
  $D_{vert}' = -(x-s)/t\del_x = -[(x -(s+tw_i))/t + w_i]\del_x$.
  Thus $D_{vert} - D_{vert}' = (x-z_i)/t\partial_x$, hence $a_1 = 1/t$.
\item For the remaining sections $a_1 = 0$.
\end{itemize}

As before, it follows from the above computations that the residue
along this divisor is the endomorphism of $\vl|_B$ which acts by
multiplication by 
\begin{equation}
\label{eq:kzres4}
\frac{c(\mu)  - \sum_{i \in S} c(\l_i)}{2(\ell + h^{\vee})}
\end{equation}
on the summand $\V_{\ovl'{\mu}} \otimes \V_{\ovl''{\mu^*}}$ for each
$\mu \in P_{\ell}$.

\subsection{} \label{sec:results} 

Our formula for the Chern classes, aside from using the computations
above also uses a result of Esnault and Verdier \cite[Appendix B,
Corollary]{EV-logderham} which we recall here for the reader's
convenience:

Let $X$ be a smooth projective variety, $D = \cup_i D_i$ a divisor
with simple normal crossings on $X$, $V$ a vector bundle on $X$ and
$\nabla$ a connection on $U = X- D$ with logarithmic singularities.
\begin{prop}[Esnault, Verdier] \label{prop:ev} 
\[
N_p(V) = (-1)^p \sum_{\alpha_1  + \dots + \alpha_s = p}
{p \choose \alpha}\mr{Tr}(\Gamma_1^{\alpha_1}\circ \dots \circ \Gamma_s^{\alpha_s})
[D_1]^{\alpha_1}\cdots[D_s]^{\alpha_s}
\]
where $N_p$ denotes the $p$'th Newton polynomial in the Chern roots of
$V$, $[D_i]$ denotes the class of $D_i$ in Hodge cohomology and
$\Gamma_i$ is the endomorphism of $V|_{D_i}$ given by the residue of
$\nabla$ along $D_i$.
\end{prop}

\smallskip

The KZ connection on $\vl$ has logarithmic singularities, so using the
computations of the residues of the KZ connection along the boundary
divisors of $\mnb$  and Proposition \ref{prop:ev} we get
the following (implicit) expression for all the Chern classes of $\vl$
in Hodge cohomology or, equivalently, the rational Chow groups of
$\mnb$. The result is
\begin{thm} \label{thm:chern} 
Let $\mg{g}$ be a simple Lie algebra, $\ell \geq 0$ an integer and
$\ovl = (\l_1,\l_2,\dots,\l_n) \in {P_{\ell}}^n$. Then
\begin{equation}
\label{eq:chern}
N_p(\vl) = (-1)^p \sum_{\alpha_1  + \dots + \alpha_s = p}
{p \choose \alpha}\mr{Tr}(\Gamma_1^{\alpha_1}\circ \dots \circ \Gamma_s^{\alpha_s})
[B_1]^{\alpha_1}\cdots[B_s]^{\alpha_s}
\end{equation}
in $\mr{CH}^p(\mnb)_{\Q}$. Here $N_p$ denotes the $p$'th Newton class, the
$B_i$, $i=1,\dots,s$, are the irreducible components of $\mnb \bs
\mn$ and $\Gamma_i$ denotes the residue of the KZ connection
along $B_i$ given by one of \eqref{eq:kzres1}, \eqref{eq:kzres2},
\eqref{eq:kzres3}, and \eqref{eq:kzres4}.
\end{thm}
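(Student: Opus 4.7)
My plan is to combine the explicit residue computations of Section \ref{sec:kzres} with the general formula of Esnault and Verdier that expresses the Chern classes (equivalently Newton classes) of a flat logarithmic vector bundle in terms of the residues of its connection along the pieces of the polar divisor. Recall that $\mnb$ is smooth and that the boundary $\mnb \setminus M_{0,n} = \bigcup_i B_i$ is a simple normal crossings divisor by \cite{keel-m0nbar}, so the hypotheses of the Esnault--Verdier setup are in place.

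The first step would be to verify that the KZ connection on $\vl|_{M_{0,n}}$ constructed in \S \ref{sec:kzconn} extends to a flat connection on $\vl$ with only \emph{logarithmic} singularities along each $B_i$. Flatness is the classical integrability of the KZ system. The logarithmic property is essentially the content of \S \ref{sec:kzres}: in local coordinates adapted to the blow-up description of each boundary divisor, the connection acquires at worst a simple pole whose residue is the endomorphism $\Gamma_i$ of $\vl|_{B_i}$ that acts by the scalars recorded in \eqref{eq:kzres1}--\eqref{eq:kzres4} on the summands of the factorisation decomposition (Proposition \ref{prop:fact}). That the pole is truly simple, and no worse, follows from the decomposition $\del_t = D_{vert} + D_{hor}$ used in each of the four cases, combined with the regularity of the Sugawara action on the conformal blocks of the normalisation supplied by \cite[Theorem 4.5]{looijenga-conformal}.

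Once the logarithmic structure is established, I would invoke \cite[Appendix B, Corollary]{EV-logderham} to conclude \eqref{eq:chern} directly. A monomial $[B_1]^{\alpha_1}\cdots[B_s]^{\alpha_s}$ appearing in the sum is naturally supported on the intersection of those $B_i$ with $\alpha_i > 0$, and on this intersection the pairwise restrictions of the residues commute as a direct consequence of the flatness of the connection; thus $\mr{Tr}(\Gamma_1^{\alpha_1}\circ \cdots \circ \Gamma_s^{\alpha_s})$ is an unambiguous function on the intersection stratum. In our situation the decomposition of $\vl|_{B_i}$ given by Proposition \ref{prop:fact} is preserved by $\Gamma_i$, and on each summand $\Gamma_i$ acts by an explicit scalar, so the trace reduces on each stratum to a finite combination of scalars weighted by ranks of the relevant conformal blocks.

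The step I would expect to be the main obstacle is the verification that the KZ connection has \emph{only} simple poles along each boundary divisor, with the claimed residues. The argument splits into the four boundary types, and in each case the residue receives a genuine Sugawara contribution from the horizontal--vertical decomposition of $\del_t$ along every section whose local equation is altered by the change of coordinates to the blow-up, together with, in case (3), an additional correction coming from the inversion $x \mapsto 1/x$ and the associated normal-ordered quadratic term in the Sugawara tensor. Once this bookkeeping is done---each case reducing to the basic Virasoro uniformisation statement of \cite[Theorem 4.5]{looijenga-conformal}---the theorem follows by substituting the scalar residues into the Esnault--Verdier formula.
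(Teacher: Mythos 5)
Your proposal follows exactly the route the paper takes: the paper's entire ``proof'' is the paragraph preceding the theorem, which asserts that the KZ connection has logarithmic singularities, takes the residues $\Gamma_i$ computed in \S\ref{sec:kzres}, and feeds them into the Esnault--Viehweg formula of \cite[Appendix B, Corollary]{EV-logderham}. Your additional remarks (smoothness of $\mnb$, normal crossings of the boundary, commutation of restricted residues via flatness, and the reduction of the traces to scalars on the factorisation summands) merely make explicit the hypotheses the paper leaves implicit, so this is the same argument in expanded form.
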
 

\begin{rems} \mbox{}
\begin{enumerate} 
\item Keel \cite{keel-m0nbar} has determined the Chow ring
  of $\mnb$, so all intersections involved in \eqref{eq:chern} may be
  computed explicitly.
\item To compute the traces one needs to know the ranks of the bundles
  of conformal blocks. These are given in closed form by the Verlinde
  formula for the classical groups and $G_2$ \cite{sorger-verlinde} or
  can be derived inductively from the $3$-point ranks using
  \cite[Corollary 3.5.2]{ueno-conformal}.
\item The traces appearing in \eqref{eq:chern} are rational numbers
  but not, in general, integers.
\end{enumerate}
\end{rems}

Since the KZ connection depends on the choice of coordinates, so do the
residues, hence also the representing cycle for $c_1(\vl) = N_1(\vl)$ in
Theorem \ref{thm:chern}. However, by averaging over all choices we
obtain a canonical representative:

\begin{cor} \label{cor:cherncan} 

Let $\mg{g}$ be a simple Lie algebra, $\ell \geq 0$ an integer and
$\ovl = (\l_1,\l_2,\dots,\l_n) \in {P_{\ell}}^n$. Then
\begin{multline} 
\label{eq:cherncan}
c_1(\vl) = \\
\dfrac{1}{2(\ell + h^{\vee})}\sum_{i=2}^{\lf n/2 \rf} \ep_i\Biggl \{
\sum_{\substack{A \subset \{1,2,\dots,n\} \\|A| = i}} \Biggl \{
\dfrac{r_{\ovl}}{(n-1)(n-2)} \Bigl \{
(n-i)(n-i-1)\sum_{a \in A}c(\l_a) + i(i-1) \sum_{a' \in A^c} c(\l_{a'}) \Bigr \} \\
- \left \{ \sum_{\mu \in P_{\ell}} c(\mu) \cdot r_{\ovl_{A,\mu}} \cdot
r_{\ovl_{A^c,\mu^*} } \right \} \Biggr \} \cdot [D_{A,A^c}] \Biggr \}
\end{multline}
in $\pic(\mnb)_{\Q}$, where $D_{A,A^c}$ is the irreducible boundary
divisor corresponding to the partition $\{1,2,\dots,n\} = A \cup A^c$
and $\ep_i = 1/2$ if $i = n/2 $ and $1$ otherwise.
\end{cor}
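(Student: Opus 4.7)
The plan is to derive \eqref{eq:cherncan} by averaging the formula of Theorem~\ref{thm:chern} (applied with $p = 1$) over all choices of which three marked sections are placed at $0, 1, \infty$ in the coordinate system of \S\ref{sec:kzconn}. For every ordered triple $(p, q, r)$ of distinct indices in $\{1, \dots, n\}$ one obtains an equation
\[
c_1(\vl) = -\sum_B \mr{Tr}(\Gamma_B^{(p,q,r)})[B];
\]
averaging over all $n(n-1)(n-2)$ such triples produces a representative invariant under the natural $S_n$-action on the marked points.

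For a fixed boundary divisor $D_{A, A^c}$, the trace $\mr{Tr}(\Gamma^{(p,q,r)}_{D_{A, A^c}})$ depends only on $k_A := |\{p, q, r\} \cap A|$ and, when $k_A \in \{1, 2\}$, on which of $p, q, r$ is the ``odd one out'' on its side. I would enumerate the eight resulting sub-cases, read off the residue scalar on the summand $\V_{\ovl_{A, \mu}} \otimes \V_{\ovl_{A^c, \mu^*}}$ from \eqref{eq:kzres1}--\eqref{eq:kzres4}, and reduce the cases $k_A \in \{2, 3\}$ to $k_A \in \{0, 1\}$ by swapping $A$ with $A^c$ (using $c(\mu^*) = c(\mu)$ and the $\mu \leftrightarrow \mu^*$ symmetry of $r_{\ovl_{A, \mu}} r_{\ovl_{A^c, \mu^*}}$). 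Six sub-cases, coming from \eqref{eq:kzres1}, \eqref{eq:kzres2}, \eqref{eq:kzres4} and their swaps, contribute the common scalar $(c(\mu) - \sum_{a \in A}c(\l_a))/(2(\ell + h^\vee))$ (or its $A \leftrightarrow A^c$ image); the remaining two, coming from \eqref{eq:kzres3} and its swap, contribute the asymmetric $(c(\mu) + \sum_{a \in A}c(\l_a) - 2c(\l_r))/(2(\ell + h^\vee))$ (or its swap), with $r$ denoting the lone special point on its side of the partition.

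Writing $i := |A|$, the number of ordered triples in each sub-case is an explicit polynomial in $i$ and $n$ (for instance $(n-i)(n-i-1)(n-i-2)$ for $k_A = 0$ and $i(n-i)(n-i-1)$ for each of the three $k_A = 1$ sub-cases). Summing scalar $\times$ count over all triples and dividing by $n(n-1)(n-2)$, the $c(\mu)$-coefficient reduces trivially to $c(\mu)$, while the terms involving $\sum_A c(\l)$, $\sum_{A^c} c(\l)$ and the $-2c(\l_r)$'s combine to produce coefficient $-n(n-i)(n-i-1)$ on each $c(\l_a)$ for $a \in A$ and $-n\,i(i-1)$ on each $c(\l_{a'})$ for $a' \in A^c$; after dividing by $n(n-1)(n-2)$ these become $\tfrac{(n-i)(n-i-1)}{(n-1)(n-2)}$ and $\tfrac{i(i-1)}{(n-1)(n-2)}$. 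Multiplying by $r_{\ovl_{A, \mu}} r_{\ovl_{A^c, \mu^*}}$ and summing over $\mu \in P_\ell$, the $c(\mu)$-term becomes $\sum_\mu c(\mu)\, r_{\ovl_{A, \mu}} r_{\ovl_{A^c, \mu^*}}$ and the remaining $c(\l)$-terms use Proposition~\ref{prop:fact} in the form $\sum_\mu r_{\ovl_{A, \mu}} r_{\ovl_{A^c, \mu^*}} = r_{\ovl}$, yielding the bracketed expression in \eqref{eq:cherncan}. The coefficient thus obtained is manifestly symmetric in $A \leftrightarrow A^c$; to avoid double-counting, restrict to $|A| = i$ with $2 \leq i \leq \lf n/2 \rf$, inserting the factor $\ep_{n/2} = 1/2$ at $i = n/2$ to correct for each equal-part partition being represented by two subsets.

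The main obstacle is this combinatorial accounting: the $-2c(\l_r)$ term from \eqref{eq:kzres3} occurs in only two of the eight sub-cases, and its delicate interaction with the $\pm\sum_A c(\l)$ contributions of the other six is precisely what converts the a~priori lopsided sum into the clean symmetric coefficients $(n-i)(n-i-1)$ and $i(i-1)$ with common denominator $(n-1)(n-2)$.
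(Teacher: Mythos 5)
Your proposal is correct and follows essentially the same route as the paper: average the residue formula of Theorem~\ref{thm:chern} over all $n(n-1)(n-2)$ labelings of $p_0,p_1,p_\infty$, sort the contributions to a fixed $D_{A,A^c}$ by where the special points land relative to $A$, and verify that the counts combine to the coefficients $-n(n-i)(n-i-1)$ and $-n\,i(i-1)$ on $c(\l_a)$, $a\in A$, and $c(\l_{a'})$, $a'\in A^c$, respectively. Your bookkeeping of the asymmetric \eqref{eq:kzres3} term (as $c(\mu)+\sum_{a\in A}c(\l_a)-2c(\l_{p_\infty})$) and of the $A\leftrightarrow A^c$ swaps matches the paper's four-case computation exactly.
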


\begin{proof}
  The choices involved in \S \ref{subsec:kzres} are the labelling of
  the last three points as $p_0,p_1$ and $p_{\infty}$. We consider all
  $n(n-1)(n-2)$ ways of choosing these labels and compute the cycle
  obtained by averaging the coefficients of $D_{A,A^c}$ for each
  possible choice.

  We consider the four types of boundary divisors considered in \S
  \ref{subsec:kzres} and consider the coefficient of $c(\l_a)$ for $a
  \in A$ coming from each of these divisor types. Let $i = |A|$.
  \begin{enumerate}
  \item $p_0 \in A$ and $p_1, p_{\infty} \in A^c$. There are
    $i(n-i)(n-i-1)$ such cases and from \eqref{eq:kzres1} each one of
    these gives a contribution of $-1$, for a total contribution of
    $-i(n-i)(n-i-1)$.
  \item $p_1 \in A$ and $p_0, p_{\infty} \in A^c$. There are again
    $i(n-i)(n-i-1)$ such cases and from \eqref{eq:kzres2} each one of
    these gives a contribution of $-1$, for a total contribution of
    $-i(n-i)(n-i-1)$.
  \item $p_{\infty} \in A$ and $p_0,p_1 \in A^c$.  There are
    $i(n-i)(n-i-1)$ such cases.  From \eqref{eq:kzres3} it follows
    that if $a = p_{\infty}$ then we get a contribution of $-1$ and
    otherwise we get a coefficient of $1$ so the total contribution is
    $(i-2)(n-i)(n-i-1)$.
  \item $p_0, p_1, p_{\infty} \in A^c$. There are
    $(n-i)(n-i-1)(n-i-2)$ such cases and from \eqref{eq:kzres4} it
    follows that each gives a contribution of $-1$ for a total
    contribution of $-(n-i)(n-i-1)(n-i-2)$.
  \end{enumerate}

  Summing all these we get that the coefficient of $c(\l_a)$ for $a
  \in A$ is $-n (n-i)(n-i-1)$. By symmetry it follows that the
  coefficient of $c(\l_{a'})$ for $a' \in A^c$ is $-ni(i-1)$. The
  claimed formula then follows from Theorem \ref{thm:chern}.
\end{proof}

\bigskip

Specialising Corollary \ref{cor:cherncan} to the case $n=4$, we get
the following formula which we state here for ease of reference later:
\begin{cor} 
\label{cor:deg4}
Let $\mg{g}$ be a simple lie algebra, $\ell \geq
0$ an integer and $\ovl = (\l_1, \l_2, \l_3,\l_4) \in
{P_{\ell}}^4$. Then 
\begin{multline}
\label{eq:mainformula}
\deg(\vl) = \frac{1}{2(\ell + h^{\vee})} \times \biggl\{ \Bigr \{
r_{\ovl} \sum_{i=1}^4 c(\l_i)
\Bigr \} \ -  \\
\Bigl \{ \sum_{\l \in P_{\ell}} c(\l)\bigl \{( r_{(\l_1, \l_2, \l)}
\cdot r_{(\l_3, \l_4, \l^*)} + r_{(\l_1, \l_3, \l)}\cdot r_{(\l_2,
  \l_4, \l^*)} + r_{(\l_1, \l_4, \l)}\cdot r_{(\l_2, \l_3, \l^*)}
\bigr \}
\Bigr \} \biggr \} \ .\\
\end{multline}
\end{cor}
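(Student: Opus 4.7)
The plan is to directly specialize Corollary \ref{cor:cherncan} to $n=4$. Since $\mnb = \ov{M}_{0,4} \cong \P^1$ and each of the three boundary divisors $D_{12,34}$, $D_{13,24}$, $D_{14,23}$ is a single point of degree $1$, computing $\deg(\vl)$ amounts to summing the coefficients of these three divisors in the expression for $c_1(\vl)$ given by \eqref{eq:cherncan}.

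In \eqref{eq:cherncan} the range $2 \leq i \leq \lfloor n/2 \rfloor$ forces $i = 2$, so $|A| = |A^c| = 2$ and $\ep_2 = 1/2$. With $n-i = i = 2$ one has $(n-i)(n-i-1) = i(i-1) = 2$ and $(n-1)(n-2) = 6$, so the rank part of the coefficient of $[D_{A,A^c}]$ simplifies to $\frac{r_\ovl}{3}\sum_{j=1}^4 c(\l_j)$, independent of the choice of $A$.

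Summing over the six size-$2$ subsets $A$ double-counts each unordered partition $\{A, A^c\}$. For the rank part this produces a factor of $2$ which, combined with $\ep_2 = 1/2$, is trivial. For the Verlinde part, swapping $A \leftrightarrow A^c$ replaces $\sum_\mu c(\mu)\, r_{\ovl_{A,\mu}}\, r_{\ovl_{A^c,\mu^*}}$ by $\sum_\mu c(\mu)\, r_{\ovl_{A^c,\mu}}\, r_{\ovl_{A,\mu^*}}$; re-indexing $\mu \mapsto \mu^*$ and using $c(\mu^*) = c(\mu)$ (the Casimir eigenvalue is invariant under $\mu \mapsto \mu^*$ since $V_{\mu^*} \cong V_{\mu}^{\vee}$) shows the two sums are equal, so the doubling again cancels $\ep_2 = 1/2$.

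Finally, the three unordered partitions $\{\{1,2\},\{3,4\}\}$, $\{\{1,3\},\{2,4\}\}$, $\{\{1,4\},\{2,3\}\}$ each contribute $1$ to the degree, assembling exactly the three factorization products in \eqref{eq:mainformula}, while the three copies of $\frac{r_\ovl}{3}\sum_j c(\l_j)$ combine to give $r_\ovl \sum_j c(\l_j)$. The calculation is pure bookkeeping; the only subtle points are the $\ep_i$ factor and the symmetry $c(\mu) = c(\mu^*)$.
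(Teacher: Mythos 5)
Your proposal is correct and is exactly the paper's argument: the paper derives Corollary \ref{cor:deg4} by "specialising Corollary \ref{cor:cherncan} to the case $n=4$," which is precisely the bookkeeping you carry out (the $\ep_2=1/2$ cancellation against the double count of unordered partitions and the symmetry $c(\mu)=c(\mu^*)$ being the only points needing care, as you note).
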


Inserting \eqref{eq:mainformula} into Proposition \ref{prop:deg}, one
obtains a formula for $\deg(\vl|_F)$ for any vital curve $F$. Since
the vital curves generate $\mr{CH}_1(\mnb)$ this gives a dual
expression for $c_1(\V_{\ovl})$ which will be useful to us
below. Similar expressions can in principle also be obtained for the
other Chern classes.

\bigskip

For $2 \leq i \leq n/2$, let $D_i := \ep_i\sum_{A} D_{A,A^c}$ where
the sum is over all $A \subset \{1,2,\dots,n\}$ with $|A| = i$ and
$D_{A,A^c}$, $\ep_i$ are as above.  For $\ovl \in {P_{\ell}}^n$ and $\mu
\in P_{\ell}$, let $\ovl_{A,\mu}$ be the $i+1$-tuple
$(\l_{a_1},\dots,\l_{a_i},\mu)$ where $A = \{a_1,\dots,a_i\}$.

\begin{cor}  
  For the action of the symmetric group $S_n$ on $\mnb$ by permutation
  of the marked points we have
\begin{multline}
\label{eq:symm}
\sum_{\sigma \in S_n} \sigma^*(c_1(\V_{\ovl})) = \frac{1}{2(\ell +
  h^{\vee})} \sum_{2 \leq i \leq n/2} i!(n-i)!\Biggl \{ \biggl \{
\Bigl ( \tbinom{n-3}{i-1} + \tbinom{n-3}{n-i -1} \Bigr ) \cdot
r_{\ovl}\sum_{j = 1}^n c(\l_j) \biggr \} \\
- \biggl \{\sum_{|A| = i} \sum_{\mu \in P_{\ell}} c(\mu) \cdot
r_{\ovl_{A,\mu}} \cdot r_{\ovl_{A^c,\mu^*} }\biggr \} \Biggr \} \cdot
[D_i]
\end{multline}
\end{cor}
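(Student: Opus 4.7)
The plan is to average the canonical representative of $c_1(\vl)$ given by Corollary~\ref{cor:cherncan} over the $S_n$-action on $\mnb$. Since $\sigma\in S_n$ acts by permuting marked points, one has $\sigma^*[D_{A,A^c}] = [D_{\sigma^{-1}(A),\sigma^{-1}(A^c)}]$, so $\sum_{\sigma\in S_n}\sigma^* c_1(\vl)$ is $S_n$-invariant and hence a $\Q$-linear combination of the classes $[D_i]$ for $2\le i\le n/2$. To identify the coefficients, I would fix $B\subset\{1,\dots,n\}$ with $|B|=i$ and compute the coefficient of $[D_{B,B^c}]$ on both sides.

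Writing $f(A)$ for the scalar in braces in \eqref{eq:cherncan} indexed by a subset $A$ of size $i$, the condition $\sigma^*[D_{A,A^c}]=[D_{B,B^c}]$ is equivalent to $\sigma^{-1}(A)\in\{B,B^c\}$; for each fixed $A$ this has $i!(n-i)!$ solutions when $i\neq n/2$, and $2\,i!(n-i)!$ solutions when $i=n/2$. In the latter case the factor $\ep_{n/2}=1/2$ in \eqref{eq:cherncan} exactly cancels the double count, thanks to the symmetry $f(A)=f(A^c)$ valid when $i=n/2$ (which follows from $(n-i)(n-i-1)=i(i-1)$ together with the substitution $\mu\mapsto\mu^*$ in the rank-product sum, using $c(\mu^*)=c(\mu)$). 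Thus in all cases the coefficient of $[D_{B,B^c}]$ in $\sum_\sigma \sigma^* c_1(\vl)$ equals $\tfrac{i!(n-i)!}{2(\ell+h^\vee)}\sum_{|A|=i} f(A)$, which depends only on $i$ as required.

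It remains to simplify $\sum_{|A|=i} f(A)$. The rank-product term is already in the claimed shape. For the Casimir term, each index $j\in\{1,\dots,n\}$ lies in $\binom{n-1}{i-1}$ subsets of size $i$ and in $\binom{n-1}{i}$ subsets of size $n-i$, so that piece contributes
\[
\frac{r_{\ovl}}{(n-1)(n-2)}\Bigl[(n-i)(n-i-1)\tbinom{n-1}{i-1}+i(i-1)\tbinom{n-1}{i}\Bigr]\sum_{j=1}^n c(\l_j).
\]
A short factorial computation gives $(n-i)(n-i-1)\binom{n-1}{i-1}=(n-1)(n-2)\binom{n-3}{i-1}$ and $i(i-1)\binom{n-1}{i}=(n-1)(n-2)\binom{n-3}{n-i-1}$, reducing the bracket to $\binom{n-3}{i-1}+\binom{n-3}{n-i-1}$ and yielding exactly \eqref{eq:symm}. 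The only real subtlety is the bookkeeping at $i=n/2$ described above; everything else is a direct substitution.
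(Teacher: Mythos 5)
Your proposal is correct and is essentially the paper's argument: the paper's proof is precisely ``Theorem \ref{thm:chern} and a simple counting argument,'' and you supply that counting argument, merely starting from the already-averaged representative of Corollary \ref{cor:cherncan} rather than directly from \eqref{eq:chern}. Your bookkeeping at $i=n/2$ (the symmetry $f(A)=f(A^c)$ cancelling $\ep_{n/2}=1/2$ against the double count) and the binomial identities reducing $(n-i)(n-i-1)\tbinom{n-1}{i-1}+i(i-1)\tbinom{n-1}{i}$ to $(n-1)(n-2)\bigl(\tbinom{n-3}{i-1}+\tbinom{n-3}{n-i-1}\bigr)$ are exactly the details the paper leaves implicit, and they check out.
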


\begin{proof}
This follows from Theorem \ref{thm:chern} and a simple counting argument.
\end{proof}

\begin{rem}
  The set $\{[D_i]\}_{1 \leq i \leq n/2}$ is a basis of
  $\pic(\mnb)_{\Q}^{S_n} \cong \pic(\mnb/{S_n})_\Q$, so the RHS of
  \eqref{eq:symm} is independent of all choices. Keel and McKernan
  \cite[Theorem 1.3]{keel-mckernan} have proved that the $[D_i]$
  generate the cone of effective divisors of $\mnb/{S_n}$; since
  $c_1(\vl)$ is always nef, it follows from \emph{op.~cit.}~that the
  coefficients of $[D_i]$ in \eqref{eq:symm} are all positive or all
  $0$. This is not the case for the coefficients of $[D_{A,A^c}]$ in
  Corollary \ref{cor:cherncan}.
\end{rem}

\section{$g=0$,  $\mg{g} = \mg{sl}_2$} \label{sec:sl2} 

In this section we consider the case $\mg{g} = \mg{sl}_2$.  We
identify $P$ with $\Z$ so that $P_+$ is identified with $\Z_{\geq
  0}$. Then for any $\l \in P_+$, $c(\l) = \l^2/2 + \l$. Furthermore,
$h^{\vee}= 2$.

\smallskip

The lemma below follows from \cite[Corollary 3.5.2]{ueno-conformal} by 
using elementary  facts about the representation theory of $\mg{sl}_2$.
\begin{lem}
\label{lem:sl2}
\mbox{ } \\
\vspace{-3mm}
\begin{enumerate}
\item For any $\ell$ and $\ovl \in {P_{\ell}}^n$, $r_{\ovl} = 0$ if
  $\sum_i \l_i$ is odd, and $\vl$ is a trivial bundle, hence has
  trivial determinant, if $\sum_i \l_i \leq 2\ell$.
\item For $\ovl = (\l_1,\l_2,\l_3)$, with $\l_1 \leq \l_2 \leq \l_3$
  and $\sum_i \l_i$ even we have
\begin{equation*}
r_{\ovl} = 
\begin{cases}
  1 & \text{if $\l_3 \leq \l_1 + \l_2$ and $\sum_i \l_i \leq 2\ell$}, \\
  0 & \text{otherwise.}
\end{cases}
\end{equation*}
\end{enumerate}
\end{lem}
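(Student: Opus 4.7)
The plan is to reduce everything to the three-point case and then to classical Clebsch--Gordan combined with the explicit fusion-rule computation that [Ueno, Corollary 3.5.2] provides.

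First I would handle part (2), since it is essentially the standard fusion rule for $\widehat{sl}_2$ at level $\ell$ applied to three points. That rule says $r_{(\l_1,\l_2,\l_3)} = 1$ iff the parity condition $\l_1 + \l_2 + \l_3$ even, the triangle inequalities $|\l_i - \l_j| \le \l_k \le \l_i + \l_j$, and the level bound $\l_1 + \l_2 + \l_3 \le 2\ell$ all hold; otherwise it is $0$. Under the hypotheses of (2), the parity is automatic and $|\l_1 - \l_2| \le \l_3$ follows from $\l_2 \le \l_3$, so the only remaining conditions are $\l_3 \le \l_1 + \l_2$ and the level bound, exactly as stated. I would derive the fusion rule itself by taking classical Clebsch--Gordan, which gives $\dim (V_{\l_1} \otimes V_{\l_2} \otimes V_{\l_3})^{sl_2} \in \{0,1\}$ under the first two conditions, and then using Ueno's result to read off the level truncation.

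For part (1), the parity vanishing is immediate from representation theory: the central element $-I \in SL_2$ acts on $V_{\l}$ by $(-1)^{\l}$, so $(\otimes_i V_{\l_i})^{sl_2} = 0$ once $\sum_i \l_i$ is odd. By Lemma \ref{lem:gen}, $\vl$ is a quotient of the constant bundle with fibre $(\otimes_i V_{\l_i})_{sl_2}$, which, $sl_2$ being semisimple, is canonically isomorphic to the invariants, and hence $\vl = 0$.

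For the triviality when $\sum_i \l_i \le 2\ell$, the strategy is to show $r_{\ovl} = \dim (\otimes_i V_{\l_i})^{sl_2}$; combined with the surjection of Lemma \ref{lem:gen} this forces the map from the constant bundle to $\vl$ to be an isomorphism, so $\vl$ is trivial. I would induct on $n$: the base case $n = 3$ is part (2), since under $\sum \l_i \le 2\ell$ the quantum and classical three-point numbers coincide. For the inductive step I would apply Proposition \ref{prop:fact} to a nodal degeneration separating $\{p_1, p_2\}$ from the rest, giving $r_{\ovl} = \sum_{\mu \in P_{\ell}} r_{(\l_1, \l_2, \mu)} \cdot r_{(\mu^*, \l_3, \dots, \l_n)}$, and match this with the classical identity $\dim(\otimes_i V_{\l_i})^{sl_2} = \sum_{\mu \in P_+} \dim (V_{\l_1} \otimes V_{\l_2} \otimes V_{\mu})^{sl_2} \cdot \dim(V_{\mu^*} \otimes V_{\l_3} \otimes \cdots \otimes V_{\l_n})^{sl_2}$. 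The main obstacle is checking that the level bound propagates through the factorisation: if $\mu$ contributes classically then the triangle inequalities on both factors give $\mu \le \min(\l_1 + \l_2, \sum_{i \ge 3} \l_i)$, and the ambient hypothesis $\sum_i \l_i \le 2\ell$ then forces both $\l_1 + \l_2 + \mu \le 2\ell$ and $\mu + \sum_{i \ge 3} \l_i \le 2\ell$, so by induction each factor agrees with its classical counterpart, and the tail is truncated only to $\mu \in P_\ell$, which is harmless since any classically contributing $\mu$ satisfies $\mu \le \ell$ by the same bounds. This closes the induction and completes the proof.
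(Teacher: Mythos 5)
Your proof is correct and fills in exactly the route the paper indicates: the paper offers no argument beyond citing Ueno's Corollary 3.5.2 together with ``elementary facts about the representation theory of $sl_2$'', and that is precisely what you supply (Clebsch--Gordan plus the level truncation for the three-point ranks, the action of $-I$ for the parity vanishing, and the factorisation recursion combined with the surjection from Lemma \ref{lem:gen} to get triviality when $\sum_i \l_i \le 2\ell$). One small caveat: the level bound you use, $\l_1+\l_2+\l_3\le 2\ell$, is the correct fusion condition, whereas the statement's ``$2\ell \ge \sum_i \l_i/2$'' should be read as $\ell \ge \sum_i \l_i/2$ (a typo in this preliminary version), so your ``exactly as stated'' is really ``exactly as intended''.
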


\subsection{The case $n=4$}

\begin{prop}
\label{prop:n=4}
Suppose $\ovl = (\l_1,\l_2,\l_3,\l_4) \in
{P_{\ell}}^4$ with $\l_1 \leq \l_2 \leq \l_3 \leq \l_4$ and $2s
:=\sum_i \l_i$  even. Then
\[
\deg(\dl) =
\begin{cases}
\max\{0, (\ell + 1 - \l_4)(s - \ell)\} & \text{if $\l_1 + \l_4 \geq \l_2 + \l_3$}, \\
\max\{0, (\ell  + 1 + \l_1  - s)(s - \ell)\} & \text{if $\l_1 + \l_4 \leq \l_2 + \l_3$} .
\end{cases}
\]
\end{prop}

\begin{proof}
  We will use descending induction on the level $\ell$. For $\ell \geq
  s$ the degree is $0$ by Lemma \ref{lem:sl2}, so we assume the
  formula holds for some $\ell \leq s$ and prove it for $\ell - 1$.

  Consider the first case. Since $\l_4 \in P_{\ell}$ we have $\l_4
  \leq \ell$ so we may assume that $\l_4 < \ell \leq s$. In this
  range, using the factorisation formula and Lemma \ref{lem:sl2}, one
  sees that $r_{\ovl}$ decreases by $1$ each time $\ell$ is decreased
  by $1$.

  We now analyse the terms occuring in the formula
  \eqref{eq:mainformula}.  Since the rank decreases by $1$ in going
  from level $\ell$ to level $\ell - 1$ it follows from factorisation
  that for each of the terms of the type $r_{(\l_1, \l_2, \l)} \cdot
  r_{(\l_3, \l_4, \l^*)}$, $r_{(\l_1, \l_3, \l)}\cdot r_{(\l_2, \l_4,
    \l^*)}$ and $r_{(\l_1, \l_4, \l)}\cdot r_{(\l_2, \l_3, \l^*)}$
  there is exactly one $\l$ which contributes a non-zero term (which
  is actually just $1$ for $\mg{sl}_2$) at level $\ell$ but gives $0$
  at level $\ell - 1$.

  Suppose this $\l = \ell$, so $\l \notin P_{\ell -1}$. Considering
  the first type and using Lemma \ref{lem:sl2} (note that $\l = \l^*$
  here) we see that we must have $\ell \leq \l_1 + \l_2$ and $\ell +
  \l_3 + \l_4 \leq 2\ell$.  This implies that $\l_i = \ell/2$ for all
  $i$. The second type leads to the same conclusion whereas the third
  gives $\ell = \l_2 + \l_3 = \l_1 + \l_4$.

  If $\l < \ell$, so $\l \in P_{\ell -1}$, it follows from Lemma
  \ref{lem:sl2} and the inequalities among the $\l_i$ that for the
  term of the first type we must have $\l_3 + \l_4 + \l = 2\ell$, so
  $\l = 2\ell - (\l_3 + \l_4)$. Similarly, we see that we must have
  $\l = 2\ell - (\l_2 + \l_4)$ for the second type, and $\l = 2\ell -
  (\l_1 + \l_4)$ for the third.

  Note that the formulas of the previous paragraph specialise to those
  of the one before that if $\l = \ell$.

  Using the induction hypothesis, we thus see that the degree for
  level $\ell -1 $ is given by
\begin{multline}
\label{eq:m1}
2(\ell + 1)\deg(\dl) = 2(\ell + 2) (\ell + 1 - \l_4)(s - \ell)\\
+ c(2\ell -(\l_3 + \l_4)) + c(2\ell -(\l_2 + \l_4)) + c(2\ell -(\l_1 +
\l_4))\ - \sum_{i=1}^4c(\l_i).
\end{multline}
Since $c(\l) = \l^2/2 + \l$ for $\mg{sl}_2$, setting $2s' =
\sum_{i=1}^4 \l_i^2$ we see that the RHS of \eqref{eq:m1} is equal to
\begin{equation*}
\begin{split}
& 2(\ell + 2) (\ell + 1 - \l_4)(s - \ell) 
+ (6 \ell^2 - 4\ell s - 4\ell\l_4 +s'
+ 2\l_4 s) 
+ (6\ell - 2s - 2\l_4) - (s' + 2s) \\
& = 2(\ell + 2) (\ell + 1 - \l_4)(s - \ell) 
+  6 \ell^2 - 4\ell s - 4\ell\l_4 + 2\l_4 s + 6 \ell -4 s -2\l_4 \\
&= 2[ (\ell^2 + \ell - \ell\l_4 + 2\ell + 2  -2\l_4)  -2\ell + \l_4 -2]s \\
&+ 2[(\ell^2 + \ell - \ell\l_4 + 2\ell + 2  -2\l_4) - 3\ell +2\l_4 - 3)](-\ell) 
-4\l_4 \\
& = 2[(\ell + 1)(\ell - \l_4)]s  + 2[(\ell  + 1)(\ell - \l_4)  + (-\ell + \l_4 -1)](-\ell) -4\l_4\\
&=2(\ell + 1)(\ell - \l_4)(s - \ell + 1)
\end{split}
\end{equation*}
Dividing by $2(\ell + 1)$ we get the formula of the proposition for
level $\ell - 1$.

The second case is proved in an entirely analogous way so we omit the
details.

\end{proof}

\subsection{The case $\ell = 1$}

If $\ell = 1$, $P_1 = \{0,1\}$ so there are $2^n$ bundles of conformal
blocks on $\mnb$. However, of these the ones corresponding to an odd
number of $1$'s are $0$ by Lemma \ref{lem:sl2} and the ones with two
$1$'s are trivial bundles of rank $1$. Thus the maximal number of
non-trivial determinants that one can possibly get is $2^{n-1} -
\binom{n}{2} - 1$, which is the same as the rank of $\pic(\mnb)$. In
fact, one has the following
\begin{thm} 
\label{thm:basis}
  For any $n \geq 4$, the set of non-trivial determinants of conformal
  blocks of level $\ell =1$ for $\mg{sl}_2$ form a basis of
  $\pic(\mnb)_{\Q}$.
\end{thm}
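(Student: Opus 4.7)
The plan is to prove linear independence of the non-zero determinants; since their number will equal the rank of $\pic(\mnb)_\Q$, this gives a basis. By Lemma \ref{lem:sl2}, $\vl$ vanishes when $\sum \l_i$ is odd and is trivial (rank one with trivial determinant) when $\sum \l_i \leq 2$, so the non-trivial determinants are parametrised by subsets $A \subseteq \{1,\ldots,n\}$ with $|A|$ even and $|A| \geq 4$ (take $\l_i = 1$ iff $i \in A$). Writing $L_A$ for the determinant, there are $2^{n-1} - 1 - \binom{n}{2}$ such, matching $\pic(\mnb)_\Q$.

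I would first compute $L_A \cdot F$ on a vital curve $F = F_{N_1,N_2,N_3,N_4}$. A short induction from Lemma \ref{lem:sl2} and Proposition \ref{prop:fact} shows $r_{(\l_1,\ldots,\l_m)} = 1$ when $\sum \l_i$ is even with all $\l_i \in \{0,1\}$, and $0$ otherwise. Corollary \ref{cor:deg4} gives $\deg \mathbb{V}_{(1,1,1,1)} = 1$, while other $\ov{\mu} \in P_1^4$ yield trivial or zero bundles; so in Proposition \ref{prop:deg} only $\ov{\mu} = (1,1,1,1)$ contributes, leaving $L_A \cdot F_{N_1,N_2,N_3,N_4} = 1$ if $|A \cap N_k|$ is odd for every $k$ and $0$ otherwise.

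I would then induct on $n$, the base $n = 4$ being immediate (one determinant of degree $1$). For the inductive step, consider the forgetful map $\pi = \pi_n : \mnb \to \ov{M}_{0,n-1}$. The $L_A$ with $n \notin A$ are pullbacks along $\pi$ of the corresponding bundles on $\ov{M}_{0,n-1}$ (Proposition \ref{prop:prop}); by the inductive hypothesis and injectivity of $\pi^*$, these span $\pi^* \pic(\ov{M}_{0,n-1})_\Q$. The remaining $L_A$ with $n \in A$ number $2^{n-2} - (n-1)$, equal to the codimension, so I need to show they are linearly independent in $\pic(\mnb)_\Q / \pi^* \pic(\ov{M}_{0,n-1})_\Q$. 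Pairing against the vital curves $F_{N_1,N_2,N_3,\{n\}}$ contracted by $\pi$ (these annihilate $\pi^*\pic(\ov{M}_{0,n-1})_\Q$) and setting $B = A \setminus \{n\}$ (of odd size $\geq 3$), the pairing becomes $\psi_B(N_1,N_2,N_3)$, the indicator that $|B \cap N_k|$ is odd for each $k = 1,2,3$.

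The hard part is showing these $\psi_B$ are linearly independent as functions on ordered $3$-partitions of $\{1,\ldots,n-1\}$ into nonempty parts; I would attack this via the Walsh--Hadamard transform $\hat{c}(S) := \sum_B c_B (-1)^{|B \cap S|}$. The identity $\chi_{N_1}(B)\chi_{N_2}(B)\chi_{N_3}(B) = (-1)^{|B|} = -1$ (for $|B|$ odd), with $\chi_S(B) := (-1)^{|B \cap S|}$, yields $\psi_B = \tfrac{1}{4}(1 - \chi_{N_1}(B) - \chi_{N_2}(B) - \chi_{N_3}(B))$, so a relation $\sum c_B \psi_B \equiv 0$ is equivalent to $\hat{c}(N_1) + \hat{c}(N_2) + \hat{c}(N_3) = K$ for every valid $3$-partition, with $K := \hat{c}(\emptyset)$. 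The support hypotheses translate to the oddness $\hat{c}(S^c) = -\hat{c}(S)$ (from $c_B = 0$ on even $|B|$) and the conditions $\sum_{S \ni j} \hat{c}(S) = 0$ for each $j$ (from $c_{\{j\}} = 0$). Writing $a_j := \hat{c}(\{j\})$, the $3$-partition relation forces $\hat{c}(S) = \sum_{j \in S} a_j - (|S|-1)K$ on proper nonempty $S$; oddness then gives $\sum_j a_j = (n-3)K$, while a direct computation from $\sum_{S \ni j} \hat{c}(S) = 0$ simplifies to $a_j = K$ for every $j$. Combining yields $(n-1)K = (n-3)K$, forcing $K = 0$ and then all $a_j = 0$, so $\hat{c} \equiv 0$ and $c = 0$. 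The delicate point is bookkeeping these concurrent constraints so that they collapse as claimed.
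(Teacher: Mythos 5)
Your proposal is correct, and its inductive step takes a genuinely different route from the paper's. Both arguments start from the same ingredients --- the count $2^{n-1}-\binom{n}{2}-1$, the base case $n=4$, and the intersection numbers of the determinants with vital curves (your formula $L_A\cdot F_{N_1,N_2,N_3,N_4}=1$ iff each $|A\cap N_k|$ is odd is right and is implicit in the paper's proof as well). But the paper quotients $\pic(\mnb)_{\Q}$ by the span of \emph{all} $n$ pullbacks $p_i^*\pic(\ov{M}_{0,n-1})_{\Q}$, shows by manipulating boundary classes that this quotient vanishes for $n$ odd and is one-dimensional for $n$ even, and then for $n$ even rules out a relation by $S_n$-averaging, pulling back along a gluing map to get $a_i+a_{i+1}=0$, and intersecting with a single vital curve to reach a nonvanishing alternating binomial sum. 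You quotient by a single $\pi_n^*\pic(\ov{M}_{0,n-1})_{\Q}$, which leaves a quotient of dimension $2^{n-2}-(n-1)$, and prove independence there by pairing with the vital curves contracted by $\pi_n$ and running a Walsh--Hadamard argument; I verified the key steps ($\psi_B=\tfrac14(1-\chi_{N_1}-\chi_{N_2}-\chi_{N_3})$ for $|B|$ odd, the deductions $\sum_j a_j=(n-3)K$ and $a_j=K$, hence $K=0$ and $\hat c\equiv 0$) and they hold. One point you should make explicit in the ``bookkeeping'': the three-partition relation alone does not constrain $\hat c(S)$ when $|S|=n-2$, since such an $S$ cannot occur as a part of a three-partition of $\{1,\dots,n-1\}$; the clean fix is to combine it with the oddness relation $\hat c(S^c)=-\hat c(S)$ from the start, turning $\hat c(N_1)+\hat c(N_2)=K-\hat c(N_3)$ into the additivity $\hat c(U\sqcup V)=\hat c(U)+\hat c(V)-K$ for disjoint nonempty $U,V$ with $U\cup V$ proper, which then yields $\hat c(S)=\sum_{j\in S}a_j-(|S|-1)K$ for all proper nonempty $S$ by induction on $|S|$. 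Your route is longer than the paper's but is purely linear-algebraic, avoids the $S_n$-symmetrization and the binomial identity, and records the full intersection matrix of the level-one determinants with the $\pi_n$-contracted vital curves, which is of independent interest.
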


\begin{proof}
We use induction on $n$.

Suppose $n=4$. Then the only possibly non-trivial determinant is for
$\ovl = (1,1,1,1)$.
From Proposition \ref{prop:n=4} we get that $\deg(\dl) = 1$, proving
the first step of the induction.

Now suppose $n>4$. Let $Q$ be the quotient of
$\pic(\mnb)_{\Q}$ by the subspace $P'$ generated by
$f_i^*(\pic(\ov{\mr{M}}_{0,n-1})_{\Q})$, $i=1,2,\dots,n$, where the
$f_i$ are forgetting maps as before.

Recall from \cite{keel-m0nbar} that for any $n\geq 4$, $\pic(\mnb)$ is
generated by the classes of the boundary divisors which are
parametrised by partitions $\{1,\dots,n\} = A \cup B$ with $|A|,|B|
\geq 2$. We denote the corresponding divisor by $D_{A,B}$, so we have
$D_{A,B} = D_{B,A}$.  For a partition $\{1,2,\dots, \wh{i},\dots,n\} =
A' \sqcup B'$ corresponding to the boundary divisor $D_{A',B'}$ on
$\ov{\mr{M}}_{0,n-1}$ (with points labelled by elements of $\{1,\dots,
\wh{i},\dots,n\}$), we have 
\begin{equation}
\label{eq:pullback}
f_i^*(D_{A',B'}) = D_{A' \cup \{i\}, B'} + D_{A', B'   \cup\{i\}} .
\end{equation}

Let $A = \{1,2,\dots,r\}$ and $B = \{r+1,r+2,\dots,n\}$ with $2 \leq r
\leq n/2$.  By switching elements of $A$ and the first $r$ elements of
$B$ in pairs using equation \eqref{eq:pullback} we get
\[
D_{\{1,2,\dots,r\},\{r+1,r+1,\dots,n\}} = D_{\{r+1,r+2,\dots,2r\},\{1,2,\dots,r,2r +1,\dots,n\}} 
\]
in $Q$. Then moving $2r +1, 2r + 2,\dots,n$ to the first set in the
new partition and using the same equation we get
\[
D_{\{r+1,r+2,\dots,2r\},\{1,2,\dots,r,2r +1,\dots,n\}} =
(-1)^{n-2r}D_{\{r+1,r+1,\dots,n\},\{1,2,\dots,r\}} = (-1)^n
D_{\{1,2,\dots,r\},\{r+1,r+1,\dots,n\}}
\]
in $Q$. If $n$ is odd, it follows that $D_{A,B} = 0$ in $Q$. Since the
symmetric group $S_n$ acts on $Q$, it follows that $Q=0$ in this case.
If $n$ is even, a similar argument shows that $Q$ has rank $1$,
generated by $D_{\{1,2,\dots,n/2\},\{n/2+1,n/2 +2,\dots,n\}}$.

As we observed earlier, if some $\l_i = 0$ then the bundle of
conformal blocks is pulled back via $f_i$, hence its determinant is in
$P'$. To complete the proof it remains to show that
$\D_{(1,1,\dots,1)}$ generates $Q$ if $n$ is even.

Suppose not, so $\D_{(1,1,\dots,1)}$ lies in $P'$. For even $r$,
$4\leq r \leq n$, let $\D_r$ be the sum of all the determinants of
conformal blocks of level $1$ with $r$ of the $\l_i$ equal to
$1$. Since $\D_n$ is preserved by the action of $S_n$, it follows
by averaging that we must have a linear relation
\[
\sum_{i=2}^{n/2} a_i \D_{2i} = 0
\]
with $a_i \in \Q$.

Let $\iota:\ov{\mr{M}}_{0,n-1} \to \mnb$ be the morphism corresponding
to attaching a $3$-pointed $\P^1$ to the last marked point. Let $\ovl'
= (\l_1,\l_2,\dots, \l_{n-2}, 0)$ with $\l_i \in \{0,1\}$ and $\sum_i
\l_i = 2r \geq 4$.  The coefficient of $\D_{\ovl'}$ in
$\iota^*(\sum_{i=2}^{n/2} a_i \D_{2i})$, which is well defined by
induction, must be zero. By the factorisation formula this coefficient
is equal to $a_{i} + a_{i+1}$ so we must have
\[
a_{i} + a_{i+1}=0 \ , 2\leq i \leq n/2 -1 .
\]
Now let $F$ be the vital curve in $ \mnb$ corresponding to the
partition $\{1,2,\dots,n\} = \{1\} \cup \{2\} \cup \{3\} \cup
\{4,5,\dots,n\}$. Using the computation in the $n=4$ case and
Proposition \ref{prop:deg} we see that for any $\ovl =
(\l_1,\l_2,\dots, \l_{n}) $ with $\sum_i \l_i = r$ even, $4\leq r \leq
n$
\[
\dl\cdot F = 
\begin{cases}
1 & \text{if $\l_1 = \l_2= \l_3 = 1$}, \\
0 & \text{otherwise}.
\end{cases}
\]
It follows that $\D_r \cdot F = {n-3 \choose r-3}$ for $r$ even,
$4\leq r \leq n$.  Since $\sum_{i=2}^{n/2} a_i \D_{2i}\cdot F =
0$, by putting together the two relations obtained so far we get
\[
f(n) := {n-3 \choose n-3} - {n-3 \choose n-5} + {n-3 \choose n-7}\dots
+ (-1)^{n/2} {n-3 \choose 1} = 0 .
 \]
 However, by the binomial theorem, $2f(n) = (1+\sqrt{-1})^n + (1 -
 \sqrt{-1})^n$, which is clearly non-zero. This contradiction
 completes the proof of the theorem.
\end{proof}

\begin{rem}
  The basis of Theorem \ref{thm:basis} has several nice properties
  which are clear from the construction: All the elements are nef line
  bundles and the basis is preserved by the action of the symmetric
  group. In fact, the basis is compatible with all natural morphisms
  among the $\mnb$'s including the forgetting and gluing morphisms.
  However, even though our basis is contained in $\pic(\mnb)$, it does
  not form an integral basis for $n>4$ as may be seen by explicit
  computation.
\end{rem}

\subsection{The critical level and GIT quotients}

For $\ovl = (\l_1,\dots,\l_n)$ we call $\ell = (\sum_i \l_i/2) -1$ the
critical level associate to $\ovl$. This is the largest possible value
for the level so that the bundle $\vl$ is not trivial. In this section
we identify the line bundles $\dl$ for the critical level and with all
$\l_i > 0$ with pullbacks of ample line bundles on the GIT quotients
$(\P^1)^n \sslash SL_2$, where the polarisation on $(\P^1)^n$ is given
by the line bundle $\mc{O}(\l_1) \boxtimes \cdots \boxtimes
\mc{O}(\l_n)$. We will do this by comparing the degrees of the $\dl$
on vital curves with the degrees of the GIT bundles for which a
formula has been given by Alexeev--Swinarski \cite{alexeev-swinarski}.

For $\ovl$, with none of the $\l_i = 0$, let $p_{\ovl}: \mnb \to
(\P^1)^n\sslash SL_2$ denote the morphism constructed by Kapranov
\cite{kapranov-chow}, where the GIT quotient is constructed using the
polarisation corresponding to $\ovl$.
\begin{thm}
\label{thm:crit}
For the critical level $\ell = \sum_i \l_i/2 -
1$, $\dl$ is a multiple of the pullback by $p_{\ovl}$ of the canonical
ample line bundle on $ (\P^1)^n\sslash SL_2$. Furthermore, $\vl$ is
the pullback of a vector bundle on $ (\P^1)^n\sslash SL_2$.
\end{thm}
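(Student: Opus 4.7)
The plan is to establish the two assertions by comparing degrees on vital curves and then descending the full bundle. For the first assertion, by \cite{keel-m0nbar} two classes in $\pic(\mnb)_{\Q}$ agree iff they agree on every vital curve, so it suffices to verify that $\deg(\dl|_F)$ is a fixed rational multiple (independent of $F$) of $\deg(p_{\ovl}^*L|_F)$ as $F$ ranges over the vital curves, where $L$ denotes the canonical $SL_2$-linearised polarisation on $(\P^1)^n \sslash SL_2$. For a vital curve $F$ associated to a partition $\{1,\dots,n\} = N_1 \cup \dots \cup N_4$, the left-hand side is computed by inserting Lemma \ref{lem:4crit} into Proposition \ref{prop:deg}: this expresses $\deg(\dl|_F)$ as a sum over $\ov{\mu}\in P_\ell^4$ of a four-point critical-level degree times the four $3$-point ranks $r_{\ovl_{\mu_k^*}}$, which by Lemma \ref{lem:sl2} are indicator functions of the triangle inequality and a level bound. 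The right-hand side is given by the Alexeev--Swinarski formula in \cite{alexeev-swinarski}, which writes $\deg(p_{\ovl}^*L|_F)$ in terms of $SL_2$-semistability of the induced weighted $4$-tuples. The inequality $\l_1 + \l_4 \geq \l_2 + \l_3$ appearing in Lemma \ref{lem:4crit} is precisely the stability criterion for four weighted points on $\P^1$, so with careful bookkeeping both expressions should reduce to the same combinatorial quantity up to a global factor depending only on $\ell$, yielding the first assertion.

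For the second assertion, I would show that $\vl$ is trivial along the fibres of $p_{\ovl}$ and then apply cohomology-and-base-change. Since $p_{\ovl}$ is a projective morphism with $p_{\ovl, *}\mc{O}_{\mnb} = \mc{O}_{(\P^1)^n \sslash SL_2}$, it suffices to check fibrewise triviality on reduced fibres. The contracted loci are precisely the boundary divisors $D_{A,A^c}$ for which the weighted subset indexed by one of $A$ or $A^c$ is $SL_2$-unstable with respect to the chosen polarisation; on such a divisor, Proposition \ref{prop:fact} decomposes $\vl|_{D_{A,A^c}}$ as $\bigoplus_{\mu \in P_\ell} \V_{\ovl_A, \mu} \otimes \V_{\ovl_{A^c}, \mu^*}$, and at the critical level Lemma \ref{lem:sl2}(1) forces the factor supported on the collapsed component to be trivial for every $\mu$ giving a nonzero summand. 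Iterating this analysis on successively deeper strata, and combining it with the already-established descent of $\dl$, produces the descent of $\vl$ itself.

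The main obstacle I foresee is the second assertion rather than the first. The vital-curve comparison is a finite combinatorial check that should fall out directly from Lemma \ref{lem:4crit} and \cite{alexeev-swinarski}. The harder step is upgrading determinantal triviality to honest vector-bundle triviality on the possibly reducible fibres of $p_{\ovl}$: this requires a careful analysis of how the factorisation decomposition assembles across successive contractions, and invoking global generation (Lemma \ref{lem:gen}) together with the tight critical-level control on the attached weight $\mu$ to force the nontrivial summands on collapsed strata into genuine triviality is where I expect the crux of the argument to sit.
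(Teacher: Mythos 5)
Your treatment of the first assertion follows the paper's route, but one step is stated in a way that does not quite work: you cannot simply ``insert Lemma \ref{lem:4crit} into Proposition \ref{prop:deg}'', because that lemma computes $\deg(\mb{D}_{\ov{\mu}})$ only when $\ell$ is the critical level \emph{for the tuple $\ov{\mu}$}, which fails for almost all $\ov{\mu}\in P_{\ell}^4$ occurring in the sum. The observation you are missing is that the sum collapses to a single term: non-vanishing of the four ranks $r_{\ovl_{\mu_j^*}}$ forces $\mu_j\leq\nu_j$ by Lemma \ref{lem:sl2}, while non-vanishing of $\deg(\mb{V}_{\ov{\mu}})$ forces $\sum_j\mu_j\geq 2\ell+2=\sum_j\nu_j$ by Lemma \ref{lem:sl2}(1); hence the only surviving summand has $\mu_j=\nu_j$ for all $j$, and for that tuple $\ell$ is exactly critical, so Lemma \ref{lem:4crit} applies and yields \eqref{eq:degcrit}, matching Alexeev--Swinarski up to a constant independent of $F$. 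This is recoverable from the ingredients you cite, but it is the entire content of the comparison rather than ``bookkeeping''.

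The second assertion is where you have a genuine gap, and you flag it yourself: you never pass from triviality of the determinant on collapsed strata to triviality of the rank-$r$ bundle on the reducible fibres of $p_{\ovl}$, and the stratum-by-stratum factorisation analysis you propose would have to be carried through all deeper boundary strata before cohomology-and-base-change could be invoked. The paper avoids any fibrewise analysis. Since $\vl$ is globally generated (Lemma \ref{lem:gen}), it is $f_{\ovl}^*$ of the tautological bundle for a classifying morphism $f_{\ovl}:\mnb\to\mr{Gr}_{\ovl}$; the determinant of the tautological bundle is ample on the Grassmannian and pulls back to $\dl$, which by the first part is a multiple of $p_{\ovl}^*$ of an ample class, so every curve contracted by $p_{\ovl}$ is contracted by $f_{\ovl}$; normality of $(\P^1)^n\sslash SL_2$ then gives a factorisation $f_{\ovl}=q_{\ovl}\circ p_{\ovl}$, whence $\vl=p_{\ovl}^*q_{\ovl}^*(\text{taut})$. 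If you insist on your descent route, the missing fibrewise step can be supplied by the fact that a globally generated vector bundle with trivial determinant on an integral projective variety is trivial (a general $r$-tuple of sections degenerates along an effective divisor in the class of $\det$, which is trivial, so the divisor is empty), but the Grassmannian factorisation renders all of this unnecessary.
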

\begin{proof}
  We first show that the degree of $\dl$ on any vital curve $F$ is a
  fixed multiple of the degree of the GIT bundle on this curve. This
  suffices for the first part since $\pic(\mnb)$ is finitely
  generated and torsion free.

  Let $F$ correspond to the partition $\{1,\dots,n\} = \sqcup_{j=1}^4
  N_j$.  For $j =1,\dots,4$, let $\nu_j = \sum_{k \in N_j} \l_k$. Let
  $\nu_{max} = \max_j \{\nu_j\}$ and $\nu_{min} = \min_j
  \{\nu_j\}$. From Proposition \ref{prop:deg} we have
  \[
  \deg(\dl)|_F =\sum_{\ov{\mu} \in {P_{\ell}}^4}
  \deg(\mb{V}_{\ov{\mu}})\ \prod_{j=1}^4 r_{\ovl_{\mu_j^*}} \ .
  \]
  Note that $\mu_j = \mu_j^*$ since $\mg{g} = \mg{sl}_2$. To get a
  non-zero summand, each of its factors must be non-zero. Considering
  the ranks, we see from Lemma \ref{lem:sl2} that this implies that
  for each non-zero summand we must have $\mu_j \leq \nu_j$ for all
  $k$. Now considering the term $\deg(\mb{V}_{\ov{\mu}})$ and applying
  Lemma \ref{lem:sl2} again, we see that we must have $\sum_j \mu_j
  \geq 2\ell + 2$. Since $2\ell = \sum_i \l_i -2 = \sum_j \nu_j - 2$,
  it follows that all the inequalities must be equalities. Thus there
  is only one possibly non-zero summand corresponding to $\mu_j =
  \nu_j$, $j=1,\dots,4$. In this summand, since $\ell \geq 2\nu_j$ for
  all $j$, by applying the factorisation formula one sees that
  $r_{{\ovl}^j} = 1$ for all $j$.
  
  It follows from Proposition \ref{prop:n=4} that
  \begin{equation} \label{eq:degcrit}   
  \deg(\dl)|_F =
  \begin{cases}
   0 & \text{if $\nu_{max} \geq \ell + 1$}, \\
   \ell + 1 -\nu_{max} & \text{if $\nu_{max} \leq \ell + 1$ and $\nu_{max} + \nu_{min} \geq \ell + 1$}, \\
   \nu_{min} & \text{if $\nu_{max} \leq \ell + 1$ and $\nu_{max} + \nu_{min} \leq \ell + 1$}.
 \end{cases}
\end{equation}
This is exactly the same, upto scaling, as the formula of
Alexeev--Swinarski \cite[Lemma 2.2]{alexeev-swinarski}.  Since the
scaling factor is independent of the specific vital curve $F$, the
statement about $\dl$ follows.

Since $\vl$ is generated by its global sections (Lemma \ref{lem:gen}),
there exists a morphism $f_{\ovl}:\mnb \to \mr{Gr}_{\ovl}$, where
$\mr{Gr}_{\ovl}$ is a grassmannian, such that $\vl$ is isomorphic to
$f_{\ovl}^*$ of the tautological vector bundle on $\mr{Gr}_{\ovl}$.
Since $\dl$ is the determinant of $\vl$ and the GIT quotient
$(\P^1)^n\sslash SL_2$ is normal, it follows that we have $f_{\ovl} =
q_{\ovl} \circ p_{\ovl}$ for some morphism $q_{\ovl}: (\P^1)^n \sslash
SL_2 \to \mr{Gr}_{\ovl}$. Thus $\vl$ is $p_{\ovl}^*$ of $q_{\ovl}^*$
of the tautological vector bundle.
\end{proof}

\subsection{Relationship with the moduli spaces of weighted pointed curves}

For any $g \geq 0$, Hassett \cite{hassett-weighted} has constructed
moduli spaces of weighted pointed stable curves. For $g = 0$, the
spaces $\ov{\mr{M}}_{0,\mc{A}}$ depend on a choice of weight data
$\mc{A} = (a_1,a_2,\dots,a_n)$ satisfying $0 < a_i \leq 1$ for all $i$
and $\sum_i a_i > 2$. There are canonical birational morphisms
$p_{\mc{A}}:\mnb \to \ov{\mr{M}}_{0,\mc{A}}$.

\begin{lem} \label{lem:wcont} 
  The morphsisms $p_{\mc{A}}$ are compositions of extremal
  contractions --- in fact smooth blowdowns --- corresponding to
  images of classes of vital curves.
\end{lem}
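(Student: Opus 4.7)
The plan is to factor $p_{\mc{A}}$ as a composition of Hassett reduction morphisms by interpolating through a carefully chosen chain of intermediate weight data, and then to identify each elementary reduction as a smooth $\P^1$-fibered blowdown whose contracted class is the image of a vital curve class. Concretely, I start with $\mc{A}_0 = (1,\dots,1)$ so that $\ov{M}_{0,\mc{A}_0} = \mnb$, and interpolate $\mc{A}_0 \geq \mc{A}_1 \geq \dots \geq \mc{A}_m = \mc{A}$ by decreasing coordinates gradually; Hassett's functoriality then gives $p_{\mc{A}} = \rho_m \circ \dots \circ \rho_1$ for reduction morphisms $\rho_j: \ov{M}_{0,\mc{A}_{j-1}} \to \ov{M}_{0,\mc{A}_j}$ between smooth projective varieties. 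The chain can be arranged so that at each step exactly one new inequality $\sum_{i \in I_j} a_i \leq 1$ (with $|I_j| \geq 3$) becomes satisfied, so that $\rho_j$ is an isomorphism away from the image of the boundary divisor $D_{I_j, I_j^c}$; if $\mc{A}$ has any $S_n$-symmetry, the chain is first chosen via a small symmetry-breaking perturbation so that each $I_j$ is a single subset rather than an $S_n$-orbit.

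For an elementary step with $|I_j| = 3$, Hassett's description of reduction morphisms shows that $\rho_j$ collapses the tail carrying the three colliding $I_j$-marked points, so the fiber of $\rho_j$ over a general point of the image is $\ov{M}_{0,4} \cong \P^1$. Hence the exceptional divisor is a $\P^1$-bundle over its image and $\rho_j$ is a smooth blowdown in the Mori sense. The contracted $\P^1$-fiber is the image under $\rho_{j-1} \circ \dots \circ \rho_1$ of the vital curve in $\mnb$ corresponding to the partition $\{i_1\} \cup \{i_2\} \cup \{i_3\} \cup I_j^c$ where $I_j = \{i_1, i_2, i_3\}$: as the cross-ratio of the four points on the tail varies, the vital curve parametrizes exactly this fiber.

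For steps with $|I_j| > 3$, I refine $\rho_j$ further by inserting intermediate weight data that collapse subtrees of the $I_j$-tail three points at a time; using the product structure $D_{I_j, I_j^c} \cong \ov{M}_{0, |I_j|+1} \times \ov{M}_{0, |I_j^c|+1}$, the question reduces to applying the lemma inductively to $\ov{M}_{0, |I_j|+1}$, which has strictly fewer marked points. The base cases $n=3,4$ are immediate.

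The main obstacle is ensuring that the chain of intermediate weight data can always be chosen to be admissible (satisfying Hassett's conditions $0 < a_i \leq 1$ and $\sum a_i > 2$) and to cross the walls $\sum_{i \in I} a_i = 1$ one at a time, i.e., transversally in the weight-data parameter space; this is a combinatorial exercise in the chamber structure of admissible weight data that one expects to follow from Hassett's own analysis, together with the symmetry-breaking perturbation mentioned above.
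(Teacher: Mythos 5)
Your overall strategy coincides with the paper's (which follows an argument of Alexeev): perturb $\mc{A}$ so that the segment from $(1,\dots,1)$ to $\mc{A}$ in Hassett's space of weight data crosses the walls $\sum_{i\in S}a_i=1$ one at a time, factor $p_{\mc{A}}$ through the corresponding chain of reduction morphisms, and identify each single wall crossing with a smooth blowdown contracting the image of a vital curve. Your treatment of the case $|I_j|=3$ is correct and is exactly what happens in the paper.

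The gap is in your reduction of the case $|I_j|=r\geq 4$ to the case of three points. First, there are no Hassett spaces strictly between two adjacent coarse chambers, so one cannot ``insert intermediate weight data that collapse subtrees three points at a time'': once the chain crosses walls transversally, each $\rho_j$ is an elementary reduction that cannot be refined further within Hassett's framework, and your appeal to induction on $\ov{M}_{0,|I_j|+1}$ does not correspond to any actual morphism in the chain. Second, you misidentify the exceptional divisor: in the intermediate space $\ov{M}_{0,\mc{A}_{j-1}}$, just above the wall for $I_j$, every proper subset of $I_j$ already has weight sum below $1$, so the tail factor of $D_{I_j,I_j^c}$ has already been collapsed from $\ov{M}_{0,r+1}$ to the quotient of $\A^r$ minus the small diagonal by the affine group, i.e.\ to $\P^{r-2}$ --- it is not $\ov{M}_{0,r+1}\times\ov{M}_{0,|I_j^c|+1}$. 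The correct (and simpler) conclusion, which is the content of Proposition 4.5 of \cite{hassett-weighted} cited in the paper, is that the single wall crossing for $I_j$ is \emph{already} a smooth blowdown whose exceptional divisor is a $\P^{r-2}$-bundle over its image; no further factorization is needed or possible. The contracted ray is generated by a line in the $\P^{r-2}$ fiber, which is the image of the vital curve attached to any partition $\{1,\dots,n\}=N_1\cup N_2\cup N_3\cup N_4$ with $N_1\cup N_2\cup N_3=I_j$. With your recursive step replaced by this observation, the argument closes.
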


The following proof was communicated to us by Valery Alexeev.
\begin{proof}
  For any $S \subset \{1,2,\dots,n\}$ such that $2 < |S| < n-2$
  Hassett defines an associated wall in the space $\mc{D}_{0,n}$ of
  allowable weight data given by the equation $\sum_{i \in S} a_i =
  1$. The set of all such walls induces a decompostion of
  $\mc{D}_{0,n}$ called the coarse chamber decomposition.

  For each $i$ let $\ep_i$ be such that $ 0 < \ep_i << 1$, let $a_i' =
  a_i - \ep_i$ and $\mc{A}' = (a_1',a_2',\dots,a_n')$. Then there is a
  natural morphism $\rho_{\mc{A},\mc{A}'}:\ov{\mr{M}}_{0,\mc{A}} \to
  \ov{\mr{M}}_{0,\mc{A}'}$ such that $\rho_{\mc{A},\mc{A}'} \circ
  p_{\mc{A}} = p_{\mc{A}'}$ which is an isomorphism if the $\ep_i$ are
  sufficiently small.  Thus we may assume that the $a_i$'s are general,
  \emph{i.e.}, the line segment joining $(a_1,a_2,\dots,a_n)$ and
  $(1,1\dots,1)$ intersects the walls of the coarse chamber
  decomposition in only codimension one faces. It therefore suffices
  to show that each such crossing corresponds to the smooth blowdown
  associated to the image of a vital curve.  This follows from
  Proposition 4.5 of \cite{hassett-weighted}, the vital curve may be
  taken to correspond to a partition $\{1,2,\dots,n\} = \sqcup_{j=1}^4
  N_j$ with $ S =\cup_{j=1}^3 N_j$.
\end{proof}

Given a level $\ell$ less than the critical level, \emph{i.e.}, $\ell <
\sum_i \l_i/2 - 1$, and an allowable tuple of weights $\ovl$, we let
$a_i = \l_i/(\ell+1)$ and $\mc{A}_{\ovl} = (a_1,a_2,\dots,a_n)$. If
all $\l_i > 0$, then we have $0 < a_i \leq 1$ for all $i$ and $\sum_i
a_i > 2$, so $\mc{A}_{\ovl}$ can be taken to be weight data in the
sense of Hassett. We therefore have a moduli space
$\ov{\mr{M}}_{0,\mc{A}_{\ovl}}$ and a birational morphism
$p_{\mc{A}_{\ovl}}:\mnb \to \ov{\mr{M}}_{0,\mc{A}_{\ovl}}$ as above.

For non-critical levels, the bundles $\dl$ are often ample, so are not
pulled back from the GIT quotients. However, we have
\begin{prop} 
\label{prop:hassett}
If all $\l_i > 0$ then $\vl$ is the pullback by $p_{\mc{A}_{\ovl}}$ of
a vector bundle on $\ov{\mr{M}}_{0,\mc{A}_{\ovl}}$.
\end{prop}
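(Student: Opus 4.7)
Since $\vl$ is globally generated (Lemma \ref{lem:gen}), the canonical surjection $(\otimes_i V_{\l_i})_{\mg{g}} \otimes \mc{O}_{\mnb} \twoheadrightarrow \vl$ defines a classifying morphism $f_{\ovl}: \mnb \to \mr{Gr}$ to the Grassmannian of rank-$r_{\ovl}$ quotients of $(\otimes_i V_{\l_i})_{\mg{g}}$, with $\vl \cong f_{\ovl}^* \mathcal{T}$ for $\mathcal{T}$ the tautological quotient bundle, exactly as at the end of the proof of Theorem \ref{thm:crit}. Since $\det \mathcal{T}$ is the ample Pl\"ucker bundle, $f_{\ovl}$ contracts a connected curve $C$ to a point iff $\dl \cdot C = 0$. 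As $\ov{M}_{0, \mc{A}_{\ovl}}$ is smooth and $p_{\mc{A}_{\ovl}}$ is a proper birational morphism between smooth varieties, $f_{\ovl}$ factors as $q_{\ovl} \circ p_{\mc{A}_{\ovl}}$ iff it contracts every fiber of $p_{\mc{A}_{\ovl}}$; in that case $\vl = p_{\mc{A}_{\ovl}}^*(q_{\ovl}^* \mathcal{T})$. The proof thus reduces to showing that $\dl \cdot C = 0$ for every curve $C$ contracted by $p_{\mc{A}_{\ovl}}$.

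By Lemma \ref{lem:wcont}, $p_{\mc{A}_{\ovl}}$ is a composition of smooth blowdowns, each contracting an exceptional divisor to a smooth subvariety along a $\P^{c-1}$-bundle structure, and the numerical class of a line in such a $\P^{c-1}$ fiber equals that of a vital curve attached to the Hassett wall being crossed. Since $\P^{c-1}$ is spanned by lines through any point, $f_{\ovl}$ is constant on that fiber iff $\dl \cdot F = 0$ for the associated vital curve $F$. Setting $\nu_j := \sum_{i \in N_j} \l_i$ and noting $a_i = \l_i/(\ell+1)$, the vital curve attached to a partition $\{1, \dots, n\} = N_1 \cup N_2 \cup N_3 \cup N_4$ is contracted precisely when, after reordering so the singled-out part is $N_4$, one has $\nu_1 + \nu_2 + \nu_3 \leq \ell$.

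For such an $F$, combining Proposition \ref{prop:deg} with Corollary \ref{cor:deg4} lets me write
\[
\dl \cdot F = \sum_{\ov\mu \in P_\ell^4} \deg(\V_{\ov\mu}) \prod_{k=1}^4 r_{\ovl_{\mu_k^*}}
\]
as an explicit double sum over $(\ov\mu, \mu) \in P_\ell^4 \times P_\ell$ in Casimir eigenvalues and three-point ranks. The hypothesis $\nu_1 + \nu_2 + \nu_3 \leq \ell$ restricts the non-vanishing contributions tightly enough that, after exchanging the order of summation, the surviving part of the sum is identified with the degree of an auxiliary four-point conformal block bundle at its own critical level, which vanishes.

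\textbf{Main obstacle.} The last step --- rewriting the double sum as a critical-level $4$-point degree and showing this vanishing for arbitrary $\mg{g}$ --- is the combinatorial heart of the proof. Lemma \ref{lem:4crit} handles $sl_2$; extending it to any simple $\mg{g}$ should follow from a parallel application of Corollary \ref{cor:deg4} together with the propagation and factorisation identities, but that algebraic bookkeeping is the non-formal part of the argument.
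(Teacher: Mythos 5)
Your reduction is the same as the paper's: global generation gives a classifying map to a Grassmannian, Lemma \ref{lem:wcont} reduces factoring through $p_{\mc{A}_{\ovl}}$ to the vanishing of $\dl\cdot F$ on the contracted vital curves, and your description of which vital curves are contracted ($\sum_j\nu_j-\nu_{j'}\le\ell$ for some $j'$) is correct. The gap is in the final step, which you leave as a sketch, and the route you sketch would not work. You propose to reorganise the double sum and identify it with the degree of an auxiliary four-point bundle ``at its own critical level, which vanishes'' --- but the critical-level degree does \emph{not} vanish: Lemma \ref{lem:4crit} computes it to be $\ell+1-\l_4$ or $\l_1$, generically positive. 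What vanishes is the degree at any level $\ge\sum_i\l_i/2$, i.e.\ strictly \emph{above} the critical level, because there the bundle is trivial by Lemma \ref{lem:sl2}(1). Conflating these two regimes is exactly the kind of off-by-one that would sink the computation.

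Moreover no exchange of summation or auxiliary identification is needed: the vanishing is termwise. Note that the proposition sits in the $\mg{g}=sl_2$ section, so you may use Lemma \ref{lem:sl2} freely. For a summand of Proposition \ref{prop:deg} indexed by $\ov\mu$ to be nonzero, each rank $r_{\ovl_{\mu_j^*}}$ must be nonzero, which by the triangle inequality of Lemma \ref{lem:sl2}(2) (propagated through factorisation) forces $\mu_j\le\nu_j$ for every $j$. Combining this with the contraction condition gives $\sum_{j\ne j'}\mu_j\le\ell$, and since $\mu_{j'}\le\ell$ always, $\sum_j\mu_j\le 2\ell$. Lemma \ref{lem:sl2}(1) then says $\V_{\ov\mu}$ is a trivial bundle, so $\deg(\V_{\ov\mu})=0$ and the summand dies. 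Every summand dies for the same reason, so $\dl\cdot F=0$. Your framing of this as an open combinatorial problem ``for arbitrary $\mg{g}$'' misreads both the scope of the statement and the difficulty of the step.
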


\begin{proof}
  We first show that $\dl$ is the pullback by $p_{\mc{A}_{\ovl}}$ of a
  line bundle on $\ov{\mr{M}}_{0,\mc{A}_{\ovl}}$.  By Lemma \ref{lem:wcont}
  it suffices to show that $\deg(\dl)|_F = 0$ for any vital curvve $F$
  contracted by $p_{\mc{A}_{\ovl}}$. Let $F$ corresponds to the
  partition $\{1,2\dots,n\} = \sqcup_{j=1}^4 N_j$ and let
  $b_j = \sum_{k \in N_j} a_k$.  The morphism $p_{\mc{A}_{\ovl}}$
  collapses $F$ if and only if $(\sum_l b_l) - b_j < 1$ for some $j\in
  \{1,2,3,4\}$. Equivalently, setting $\nu_j =\sum_{k \in N_j} \l_k$,
  if and only if
  \begin{equation}
\label{eq:e1}
    \sum_j \nu_j  - \nu_{j'} \leq \ell   \text{ for some } j' \in
  \{1,2,3,4\} .
\end{equation} 

We now apply Proposition \ref{prop:deg}. If a tuple $\ov{\mu} \in
{P_{\ell}}^4$ is to contribute a non-zero summand, all the ranks have
to be non-zero so we must have $\mu_j \leq \nu_j$ for all
$j=1,\dots,4$. Equation \eqref{eq:e1} then implies that
\begin{equation}
  \sum_j \mu_j  - \mu_{j'} \leq \ell   \text{ for some } j' \in
  \{1,2,3,4\} .
\end{equation} 
Since $\mu_{j'} \leq \ell$ as well, we get $\sum_j \mu_j \leq 2\ell$.
It follows from Lemma \ref{lem:sl2} that
$\deg(\D_{\ov{\mu}}) = 0$.

We conclude that all the summands in the formula for $\deg(\dl|_F)$
are $0$, hence $\deg(\dl|_F) = 0$.

The statement about $\vl$ now follows in the same way as the
corresponding statement in Theorem \ref{thm:crit} since
$\ov{\mr{M}}_{0,\mc{A}_{\ovl}}$ is smooth, hence also normal.
  
\end{proof}

\begin{rem}
  It is not true in general that $\dl$ is the pullback by
  $p_{\mc{A}_{\ovl}}$ of an ample line bundle on
  $\ov{\mr{M}}_{0,\mc{A}_{\ovl}}$ : Let $n=6$, $\ell = 1$ and $\ovl =
  (1,1,1,1,1,1)$. In this case $\dl$ has degree $0$ on any vital curve
  corresponding to the partition $6 = 1 + 1 + 2 + 2$ and the
  correponding morphism is the well known birational morphism from
  $\ov{\mr{M}}_{0,6}$ to the Igusa quartic. However, the morphism
  $p_{\mc{A}_{\ovl}}:\ov{\mr{M}}_{0,6} \to
  \ov{\mr{M}}_{0,\mc{A}_{\ovl}}$ is an isomorphism since all $a_i =
  1/2$.
\end{rem}

\section{$g=0$, arbitrary $\mg{g}$  and  $\ell = 1$} 
\label{sec:level1} 

For an aribtrary simple Lie algebra $\mg{g}$ we are not able to say
much about the Chern classes of conformal blocks for arbitrary levels
$\ell$. However, in the simplest non-trivial case of level $1$ the
formulae simplify considerably and we discuss them in more detail in
this section. For the simply-laced Lie algebras the sheaves $\ovl$
have rank at most one and we get the simplest formulae in these cases.

\subsection{}
The following is the list of level $1$ representations for the simple
Lie algebras. We use the notation of Bourbaki \cite{bourbaki-roots}.
\begin{itemize}
\item $A_{\ell}$ : all fundamental weights; $\vp_i$ is dual to
  $\vp_{\ell +1 -i}$.
\item $B_{\ell}$, $\ell \geq 2$ : $\vp_1$ and $\vp_{\ell}$,
  \emph{i.e.},  the standard representation and the spin representation;
  these representations are self dual.
\item $C_{\ell}$ : all fundamental weights; these are all self dual.
\item $D_{\ell}$, $\ell \geq 3$ : $\vp_1$, $\vp_{\ell -1}$ and
  $\vp_{\ell}$, \emph{i.e.},  the standard representation and both the
  spin representations; the first representation is self dual and the
  other two are self dual if $\ell$ is even and dual to each other if
  $\ell$ is odd.
\item $E_6$ : $\vp_1$ and $\vp_6$; these representations are dual to
  each other.
\item $E_7$ : $\vp_7$; this is self dual.
\item $E_8$ : there are no level $1$ representations.
\item $F_4$ : $\vp_4$; this is self dual.
\item $G_2$ : $\vp_1$; this is self dual.
\end{itemize}

\subsection{}

Recall (see \emph{e.g.}~\cite[Corollary 3.5.2]{ueno-conformal}) that
for a simple Lie algebra $\mg{g}$, any level $\ell$ and $\lambda, \mu
\in P_{\ell}$, we have $r_{(\lambda,\mu, 0)} = 1$ if $\mu = \lambda^*$
and $0$ otherwise.  Moreover, for $\l_1,\l_2,\l_3$ of level $1$,
$\mb{V}_{(\l_1, \l_2,\l_3)}$ is the quotient of $(V_{\l_1} \otimes
V_{\l_2} \otimes V_{\l_3})_{\mg{g}}$ by the image of the subspace
$V_{\l_1}^{(1)} \otimes V_{\l_2}^{(1)} \otimes V_{\l_3}^{(1)}$, where
$V_{\l_i}^{(1)}$ is the $\mg{s}$-submodule of $V_{\l_i}$ which is the
direct sum of all the non-trivial irreducible $\mg{s}$-submodules.
Since $H_{\theta}$ has no covariants on this subspace, it follows that
the image of this subspace in $(V_{\l_1} \otimes V_{\l_2} \otimes
V_{\l_3})_{\mg{g}}$ is $\{0\}$. Therefore $\mb{V}_{(\l_1,\l_2,\l_3)} =
(V_{\l_1} \otimes V_{\l_2} \otimes V_{\l_3})_{\mg{g}}$.

\subsubsection{$A_{\ell}$}

In this case given $\l_1$, $\l_2$ of level $1$, there exists a unique
$\l_3$ of level $1$ such that $r_{(\l_1,\l_2,\l_3)} = 1$. By the
factorisation formula, it follows that for $\l_1,\l_2,\dots,\l_{n-1}$
of level $1$, there exists a unique $\l_n$ of level $1$ such that for
$\ovl = (\l_1,\l_2,\dots,\l_n)$, $r_{\ovl} \neq 0$. Moreover, it then
follows that $r_{\ovl} = 1$. Because of this there is only one
non-zero summand in the formula in Proposition \ref{prop:deg} and so
the computation of the determinants reduces to the computation of the
degree for the case $n=4$.

Note that 
\[
c(\vp_i) = (\vp_i|\vp_i) + 2(\vp_i|\rho) = i(m-i)/m  + i(m-i) = i(m-i)(m+1)/m \ .
\] 
where $m = \ell+1$.
Using this, the fact that $h^{\vee}$ for $\mg{sl}_m$ is $m$ and formula
\eqref{eq:mainformula}, one easily checks the following:
\begin{lem}
  Let $\vp_i, \vp_j, \vp_k, \vp_l$ be fundamental dominant weights of
  $\mg{sl}_m$ and suppose that $i \leq j \leq k \leq l$. For $\ovl =
  (\vp_i, \vp_j, \vp_k, \vp_l)$, we have
\[
\deg(\dl) = \begin{cases}
i & \text{if $i+j+k+l = 2m$ and $j+k \geq i + l$,}\\
m-l & \text{if $i+j+k+l = 2m$ and $j+k \leq i + l$,} \\
0 & \text{otherwise}. 
\end{cases}
\]
\end{lem}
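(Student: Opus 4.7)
The plan is to substitute into the four-point formula \eqref{eq:mainformula} with $\mg{g} = sl_m$, $\ell = 1$, $h^{\vee} = m$, and $c(\vp_a) = a(m-a)(m+1)/m$. First I would record the level-one fusion rules: since $V_{\vp_a} \otimes V_{\vp_b}$ contains a unique irreducible summand of level $\leq 1$, namely $V_{\vp_{(a+b)\bmod m}}$, and since $\vp_c^{\ast} = \vp_{m-c}$, one has $r_{(\vp_a,\vp_b,\vp_c)} = 1$ iff $a+b+c \equiv 0 \pmod m$ and $0$ otherwise. Applying factorisation (Proposition \ref{prop:fact}) then gives $r_{\ovl} = 1$ precisely when $i+j+k+l \equiv 0 \pmod m$, and in that case exactly one $\mu \in P_1$ contributes a nonzero summand for each of the three pairings appearing in \eqref{eq:mainformula}. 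When $r_{\ovl} = 0$ the vanishing is immediate, so we may assume $i+j+k+l \equiv 0 \pmod m$.

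Since $0 \leq i \leq j \leq k \leq l \leq m-1$, the remaining possibilities are $i+j+k+l \in \{0, m, 2m, 3m\}$. The case $0$ is trivial, and the involution $\vp_a \mapsto \vp_a^{\ast} = \vp_{m-a}$ preserves \eqref{eq:mainformula} (it replaces each rank and each $c(\cdot)$ by an equal quantity) and swaps $m$ with $3m$, so it suffices to handle $i+j+k+l \in \{m, 2m\}$.

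For $i+j+k+l = 2m$: the ordering $i \leq j \leq k \leq l$ forces $k+l \geq m$ and $j+l \geq m$, while $j+k \geq m$ iff $j+k \geq i+l$. The unique contributing weights $\mu$ for the three pairings are therefore $\vp_{k+l-m}$, $\vp_{j+l-m}$, and either $\vp_{j+k-m}$ (when $j+k \geq i+l$) or $\vp_{j+k}$ (when $j+k \leq i+l$). Substituting into \eqref{eq:mainformula} and using $(i+j+k+l)^2 = \sum_a x_a^2 + 2\sum_{a<b} x_a x_b$ together with $i+j+k+l = 2m$, the expression inside the braces collapses to $2mi \cdot (m+1)/m$ or $2m(m-l) \cdot (m+1)/m$ respectively, yielding $\deg(\dl) = i$ and $\deg(\dl) = m-l$.

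For $i+j+k+l = m$: now each of $k+l, j+l, j+k$ is at most $m$, so the contributing weights are $\vp_{k+l}, \vp_{j+l}, \vp_{j+k}$ directly (with $c(\vp_0) = 0$ absorbing any summand whose index happens to be $m$). A parallel computation, this time using $(i+j+k+l)^2 = m^2$, shows that the two terms in \eqref{eq:mainformula} agree, so $\deg(\dl) = 0$. The main obstacle throughout is the case analysis that pins down the unique contributing $\mu$ in each pairing---i.e.~whether its index requires reduction modulo $m$---but once this bookkeeping is in hand, the remaining manipulations are a routine consequence of the symmetric-function identity above together with the constraint on $i+j+k+l$.
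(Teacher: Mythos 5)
Your proposal is correct and is exactly the verification the paper has in mind: the paper's ``proof'' consists of the single remark that the lemma follows from the formula $c(\vp_a)=a(m-a)(m+1)/m$, the value $h^{\vee}=m$, and \eqref{eq:mainformula}, and you carry out precisely that substitution, with the level-one fusion rule $r_{(\vp_a,\vp_b,\vp_c)}=1 \iff a+b+c\equiv 0 \pmod m$ pinning down the unique contributing $\mu$ in each pairing. The case analysis ($i+j+k+l\in\{0,m,2m,3m\}$, the duality reduction of $3m$ to $m$, and the dichotomy $j+k\gtrless m$) and the final algebra all check out.
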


From this and Proposition \ref{prop:deg} we immediately get the
following:
\begin{prop} \label{prop:sln1} 

Let $\ovl = (\vp_{i_1}, \vp_{i_2},
\dots,\vp_{i_n})$ with $0 \leq i_j < m$ for $j = 1,2,\dots,n$, where
$\vp_0 := 0$. Let $F$ be a vital curve in $\mnb$ corresponding to a
partition $\{1,2,\dots,n\} = \sqcup_{k=1}^4 N_k$. Let $\nu_k$ be the
representative in $\{0,1,\dots,m-1\}$ of $\sum_{j \in N_k} i_j$ modulo
$m$.  Let $\nu_{max} = \max_k \{\nu_k\}$ and $\nu_{min} = \min_k
\{\nu_k\}$. Then
\[
\deg(\dl|_F) = 
\begin{cases}
\nu_{min} & \text{if $\sum_k \nu_k = 2m$ and $\nu_{max} + \nu_{min} \leq m$,} \\
m - \nu_{max} & \text{if $\sum_k \nu_k = 2m$ and $\nu_{max} + \nu_{min} \geq m$,} \\
0 & \text{otherwise}.
\end{cases}
\]
\end{prop}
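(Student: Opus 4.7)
The plan is to apply Proposition \ref{prop:deg} to the vital curve $F$ and exploit the extremely restrictive combinatorics of level-$1$ conformal blocks for $\mg{g} = sl_m$ to collapse the resulting sum to a single summand, then invoke the preceding $n = 4$ lemma.

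The key observation is the arithmetic of the ranks already recorded in the lead-up to Proposition \ref{prop:sln1}: for any number of level-$1$ weights $\vp_{a_1},\dots,\vp_{a_r}$ of $sl_m$, iterated factorisation shows that $r_{(\vp_{a_1},\dots,\vp_{a_r})}$ equals $1$ if $\sum_s a_s \equiv 0 \pmod m$ and $0$ otherwise. Therefore, for each $k \in \{1,2,3,4\}$ and each candidate $\mu_k = \vp_{t_k} \in P_1$ (with $t_k \in \{0,\dots,m-1\}$), the rank $r_{\ovl_{\mu_k^*}}$ is nonzero precisely when $t_k = \nu_k$, in which case it equals $1$. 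Hence in the sum of Proposition \ref{prop:deg} only the summand corresponding to $\ov{\mu} = (\vp_{\nu_1},\vp_{\nu_2},\vp_{\nu_3},\vp_{\nu_4})$ can contribute, and
\[
\deg(\dl|_F) \;=\; \deg\bigl(\mb{V}_{(\vp_{\nu_1},\vp_{\nu_2},\vp_{\nu_3},\vp_{\nu_4})}\bigr).
\]

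It remains to evaluate this using the preceding lemma. Reorder the $\nu_k$ so they appear as $i \le j \le k \le l$; thus $i = \nu_{min}$ and $l = \nu_{max}$. If $\sum_k \nu_k \ne 2m$ the lemma returns $0$, which is the ``otherwise'' case of the proposition (the cases $\sum_k \nu_k \in \{0,m,3m\}$ give $0$ directly from the lemma, and any sum not divisible by $m$ makes the bundle itself vanish). If $\sum_k \nu_k = 2m$, the lemma gives $\nu_{min}$ when $j+k \ge i+l$ and $m - \nu_{max}$ when $j+k \le i+l$; using $i+j+k+l = 2m$, these inequalities rearrange respectively to $\nu_{min}+\nu_{max} \le m$ and $\nu_{min}+\nu_{max} \ge m$, which is exactly the case distinction in the statement.

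I do not foresee any genuine obstacle: the only real content is the collapse of the sum in Proposition \ref{prop:deg} to a single term, which is automatic from the level-$1$ $A_\ell$ rank facts assembled earlier, and thereafter the proof is a routine translation of the $n = 4$ lemma.
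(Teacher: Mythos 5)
Your proposal is correct and is essentially the paper's own argument: collapse the sum in Proposition \ref{prop:deg} to the single summand $\ov{\mu} = (\vp_{\nu_1},\dots,\vp_{\nu_4})$ using the level-$1$ rank facts for $sl_m$, then translate the case distinction of the $n=4$ lemma via $i+j+k+l=2m$. The inequality rearrangement and the check of the degenerate/``otherwise'' cases are exactly the routine verifications the paper leaves implicit.
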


\begin{rem}
  Comparing this formula with the formula for the degrees for
  $\mg{sl}_2$ and the critical level \eqref{eq:degcrit}, we see that
  the critical level $\ell$ determinants for $\mg{sl}_2$ are a special
  case for the level $1$ determinants for $\mg{sl}_{\ell + 1}$. More
  precisely, given an $n$-tuple of non-negative integers
  $(i_1,i_2,\dots,i_n)$ such that $\sum_ji_j$ is even, we let $\ell =
  \sum_ji_j/2 -1$. Then the determinant of the bundle of conformal
  blocks on $\mnb$ associated to $\mg{sl}_2$ with weights
  $(i_1,i_2,\dots,i_n)$ and level $\ell$ is isomorphic to the bundle
  of conformal blocks (it is already of rank $1$) on $\mnb$ associated
  to $\mg{sl}_{\ell +1}$ with weights $(\vp_{i_1}, \vp_{i_2},
  \dots,\vp_{i_n})$ and level $1$.
\end{rem}

\subsubsection{$B_{\ell}$}
In this case we have $r_{(\vp_1,\vp_1,\vp_1)} =
r_{(\vp_{\ell},\vp_{\ell},\vp_{\ell})} = r_{(\vp_1,\vp_1,\vp_{\ell})}
= 0$ whereas $r_{(\vp_1,\vp_{\ell}, \vp_{\ell})} = 1$. From this and
the factorisation formula, one sees that for $\ovl \in {P_1}^n$,
$r_{\ovl} = 0$ unless $m$, the number of $\l_i$ equal to $\vp_{\ell}$,
is even. If $m=0$ then the number of $\vp_1$ must be even and then
$r_{\ovl} =1 $ whereas if $m >0$ then the rank is $2^{m/2 - 1}$.  The
possible $\ovl \in {P_1}^4$ with $\deg(\vl) > 0$ are, upto order,
$(\vp_1,\vp_1,\vp_1,\vp_1)$, $(\vp_1,\vp_1,\vp_{\ell},\vp_{\ell})$ and
$(\vp_{\ell},\vp_{\ell},\vp_{\ell},\vp_{\ell})$.

From the tables in \cite{bourbaki-roots} we see that $h^{\vee} = 2\ell
-1$, $c(\vp_1) = 2\ell$ and $c(\vp_{\ell})
= \tfrac{\ell(2\ell + 1)}{4}$.

\subsubsection{$C_{\ell}$}
In this case, the fusion for level $1$ is similar to the fusion for
$\mg{sl}_2$ at level $\ell$, \emph{i.e.}, the bijection between
$P_{1,\mg{sp}_{2\ell}}$ and $P_{\ell,\mg{sl}_2}$ which sends $\vp_i$
to $i$ preserves the $3$-point ranks. This can be seen by explicit
computation using the generalised Littlewood-Richardson rule of
Littelmann \cite[p.~42]{littelmann-characters}.  By factorisation, it
follows that all $n$-point ranks are preserved. For the degree in the
case $n=4$, we have:
\begin{prop} \label{prop:spn=4} 
Suppose $n=4$, $i \leq j \leq k \leq l$ and $2s := i+j+k+l$
is even.  Then for any rank $\ell \geq l$
and $\ovl = (\vp_i,\vp_j,\vp_k,\vp_l)$ we have
\[
\deg(\dl) =
\begin{cases}
\max\{0, (\ell + 1 - l)(2s - \ell - l)/2\} & \text{if $i + l \geq j + k$ and $\ell \leq s$}, \\
(s + 1 -l)(s - l)/2  & \text{if $i + l \geq j + k$ and $\ell \geq s$}, \\
\max\{0, (\ell  + 1 + i  - s)(i - \ell + s)/2\} & \text{if $i+k \leq j+l$ and $\ell \leq s$}, \\
i(i+1)/2 & \text{if $i+k \leq j+l$ and $\ell \geq s$}.
\end{cases}
\]
\end{prop}
\begin{proof}
  From the tables in \cite{bourbaki-roots}, one computes that for
  $\mg{sp}_{2\ell}$, $h^{\vee} = \ell + 1$ and $c_{\ell}(\vp_i) =
  i(\ell - i/2 + 1)$, where we use a subscript for the Casimir action
  to emphasize that it depends on $\ell$. Using this, one may prove
  the proposition in a similar way to the proof of Proposition
  \ref{prop:n=4} so we only describe the changes that need to be made.

  Consider the first case. Since the degree is not necessarily zero
  for large $\ell$ we use (increasing) induction beginning with the
  base case $\ell = l$.  It follows from factorisation that for each
  of the terms of the type $r_{(\vp_i, \vp_j, \l)} \cdot r_{(\vp_k,
    \vp_l, \l^*)}$, $r_{(\vp_i, \vp_k, \l)}\cdot r_{(\vp_j, \vp_l,
    \l^*)}$ and $r_{(\vp_i, \vp_l, \l)}\cdot r_{(\vp_j, \vp_k, \l^*)}$
  occuring in the formula of Corollary \ref{cor:deg4}, there is
  exactly one $\l$ which contributes a non-zero term (which is
  actually just $1$); in particular the rank is $1$. These $\l$ are
  seen to be, in order, $\vp_{l-k}$, $\vp_{l-j}$ and $\vp_{l-i}$.  The
  degree is therefore given by
\[
\frac{1}{2(l + 2)}\{c_{\ell}(\vp_i) + c_{\ell}(\vp_j) + c_{\ell}(\vp_k) + c_{\ell}(\vp_l)
- \{ c_{\ell}(\vp_{l-i}) + c_{\ell}(\vp_{l-j}) + c_{\ell}(\vp_{l-k})\} \} .
\]
Substituting in the values of the $c_{\ell}$, simple algebraic
manipulations show that this is equal to $s-l$, proving the base
case. As seen before for the case of $\mg{sl}_2$, the rank increases
by $1$ each time the level increases by $1$ until $\ell = s$, so we
may prove the formula for all $\ell$ such that $l \leq \ell \leq s$ by
induction as before.

We now consider the second case. It follows from the first case that
the formula holds for $\ell = s$, so we use induction to prove it for
larger $\ell$. For all such $\ell$, the rank is constant and all the
terms in \eqref{eq:mainformula} for varying $\ell$ are the same except
for the fact that the Casimir actions depend on $\ell$. More
precisely, we have
\[
\deg_{\ell}(\vl) = \frac{1}{2(\ell + 2)}
\{c_{\ell}(\vp_i) + c_{\ell}(\vp_j) + c_{\ell}(\vp_k) + c_{\ell}(\vp_l)
- \sum_{\l \in P_1} c_{\ell}(\l)\cdot a_{\l}\}
\]
where $a_{\l} \in \{0,1,2,3\}$ is independent of $\ell$ in this
range. It follows from this and the formula for $c_{\ell}$ that
$\lim_{\ell \to \infty} \deg_{\ell}(\vl)$ exists. Since
$\deg_{\ell}(\vl)$ is always an integer it follows that
$\deg_{\ell}(\vl)$ is constant for $\ell \gg 0$.

The formula also shows that
\begin{align*}
&2(\ell + 2)(\ell+3)(\deg_{\ell}(\vl) - \deg_{\ell + 1}(\vl))\\
= &(\ell + 3)\{c_{\ell}(\vp_i) + c_{\ell}(\vp_j) + c_{\ell}(\vp_k) + c_{\ell}(\vp_l)
- \sum_{\l \in P_1} c_{\ell}(\l)\cdot a_{\l}\} \\
- &(\ell + 2)\{c_{\ell + 1}(\vp_i) + c_{\ell + 1}(\vp_j) + c_{\ell + 1}(\vp_k) + c_{\ell + 1}(\vp_l)
- \sum_{\l \in P_1} c_{\ell + 1}(\l)\cdot a_{\l}\} \\
= &(\ell + 3)\{c_{\ell}(\vp_i) + c_{\ell}(\vp_j) + c_{\ell}(\vp_k) + c_{\ell}(\vp_l)
- \sum_{\l \in P_1} c_{\ell}(\l)\cdot a_{\l}\} \\
- &(\ell + 3)\{c_{\ell + 1}(\vp_i) + c_{\ell + 1}(\vp_j) + c_{\ell + 1}(\vp_k) + c_{\ell + 1}(\vp_l)
- \sum_{\l \in P_1} c_{\ell + 1}(\l)\cdot a_{\l}\} \\
+& \{c_{\ell + 1}(\vp_i) + c_{\ell + 1}(\vp_j) + c_{\ell + 1}(\vp_k) + c_{\ell + 1}(\vp_l)
- \sum_{\l \in P_1} c_{\ell + 1}(\l)\cdot a_{\l}\} \\
= & -(\ell + 3)\{i + j + k + l - \sum_{\l \in P_1}\l\cdot a_{\l} \}\\
+& \{c_{\ell + 1}(\vp_i) + c_{\ell + 1}(\vp_j) + c_{\ell + 1}(\vp_k) + c_{\ell + 1}(\vp_l)
- \sum_{\l \in P_1} c_{\ell + 1}(\l)\cdot a_{\l}\} \ .\\
\end{align*}
For $\ell \gg 0$, we have seen that this is $0$ so $\deg_{\ell}(\vl) =
\{i + j + k + l - \sum_{\l \in P_1}\l\cdot a_{\l} \}/2$ for such
$\ell$.  If the formula is known for level $\ell + 1$ with $\ell \geq
s$, then the above equations imply that it also holds for level
$\ell$. This implies $\deg_{\ell}(\vl)$ is constant for all $\ell \geq
s$, hence is equal to $\deg_s(\vl)$, completing the proof in this
case.

The remaing two cases are derived in a very similar way so we omit the
details.
\end{proof}

Comparing Proposition \ref{prop:spn=4} with Proposition
\ref{prop:n=4}, we see that the degree for $\mg{sp}_{2\ell}$ is always
greater than or equal to the corresponding degree for $\mg{sl}_2$.  It
follows that for any $n \geq 4$ and $\ovl \in {P_{1,
    \mg{sp}_{2\ell}}}^n \leftrightarrow {P_{\ell, \mg{sl}_2}}^n$, the line bundle
$\dl^{\mg{sp}_{2\ell}} \otimes (\dl^{\mg{sl}_{2}})^{-1}$ has
non-negative degree on every vital curve.
\emph{However, we do not know
  if this line bundle is nef.}

\begin{rem}
  It seems possible that similar results might hold more generally,
  with conformal blocks of $\mg{sp}_{2r}$ at level $\ell$ being ``more
  positive'' than the corresponding conformal blocks of
  $\mg{sp}_{2\ell}$ at level $r$ if $r \geq \ell$. Here one should use
  the bijection between $P_{\ell,\mg{sp}_{2r}}$ and
  $P_{r,\mg{sp}_{2\ell}}$ given by replacing a Young tableau with its
  transpose. Note that this is not the same as the bijection used by
  Abe in \cite{abe-strange} in his formulation of ``strange duality''.
\end{rem}

\subsubsection{$D_{\ell}$}
Let $R \subset P$ denote the root lattice. The quotient map $P \to
P/R$ identifies $P_1$ with $P/R$ giving the former the structure of
an abelian group whose operation we denote by $\odot$ .  It is known,
see for example \cite{abe-strange}, that for $\ovl = (\l_1,\l_2,\l_3)
\in {P_1}^3$, $r_{\ovl} = 1$ iff $\l_1 \odot \l_2 \odot \l_3 = 0$ and
$r_{\ovl} = 0$ otherwise.  It follows from the the factorisation
formula that for general $\ovl \in {P_1}^n$, $\vl$ is non-zero iff
$\bigodot_i \l_i = 0$, in which case it is always a line bundle.

Let $F$ be a vital curve in $\mnb$ corresponding to a partition
$\{1,2,\dots,n\} = \sqcup_{k=1}^4 N_k$ and for each $k$, let $\nu_k =
\bigodot_{j \in N_k} \l_j$ and $\ov{\nu} :=(\nu_1,\nu_2,\nu_3,\nu_4)
$.  Then by Proposition \ref{prop:deg}, $\deg(\dl|_F) =
\deg(\D_{\ov{\nu}})$.  If any $\nu_i = 0$, then
$\deg(\D_{\ov{\nu}}) = 0$ and for $\D_{\ov{\nu}}$ to be
nontrivial we must also have $\sum_{k=1}^4 \nu_k \in R$. Furthermore,
\begin{itemize}
\item If $\ell$ is even it follows that we must have, upto ordering,
  $\nu_1 = \nu_2 \neq 0$ and $\nu_3 = \nu_4 \neq 0$ in order to get a
  non-zero degree. In this case, it then follows from Corollary
  \ref{cor:deg4} that
\begin{equation} \label{eqn:deven}
\deg(\D_{\ov{\nu}}) 
= \begin{cases} 
\frac{\ell}{2} & \text{if all $\nu_k = \nu \in \{\vp_{\ell-1},\vp_{\ell}\}$,} \\
2 & \text{if all $\nu_k = \vp_1$,} \\
\frac{\ell  -2}{2} & \text{if all $\nu_k \in  \{\vp_{\ell-1},\vp_{\ell}\}$ but not all equal,}\\
1 & \text{otherwise.}
\end{cases}
\end{equation}
\item If $\ell$ is odd it follows that we must have all $\nu_k$ equal
  or, upto order, $\ov{\nu} = (\vp_1,\vp_1,\vp_{\ell - 1},
  \vp_{\ell})$ or
  $(\vp_{\ell-1},\vp_{\ell-1},\vp_{\ell},\vp_{\ell})$. Applying
  Corollary \ref{cor:deg4} again we see that
\begin{equation} \label{eqn:dodd}
\deg(\D_{\ov{\nu}}) 
= \begin{cases}
\frac{\ell -3 }{2} & \text{if all $\nu_k = \nu \in \{\vp_{\ell - 1}, \vp_{\ell}\}$,} \\
2  & \text{if all $\nu_k = \vp_1$,} \\
\frac{\ell - 1}{2}  & \text{if all $\nu_k \in \{\vp_{\ell - 1}, \vp_{\ell}\}$ but not all equal,} \\
1 & \text{otherwise.}
\end{cases}
\end{equation}
\end{itemize}
From the tables in \cite{bourbaki-roots} we have used that $h^{\vee} =
2\ell -2$ and one also calculates that $c(\vp_1) = 2\ell -1$,
$c(\vp_{\ell-1}) = c(\vp_{\ell}) =
\tfrac{\ell(2\ell - 1)}{4}$. 

For $n>4$, one easily sees from the above that all the $\dl$ lie on
the boundary of the nef cone.  As $\ell$ varies, keeping its parity the
same, the non-trivial conformal blocks are indexed by the same data so
we identify all the sets $P_{1,\mg{so}_{2\ell}}$ for $\ell$ even with
the set $P_{1,even} := \{0,\vp_1,\vp_{e -1}, \vp_e\}$ with $e$ a formal
symbol and similarly for $\ell$ odd, with $e$ replaced by $o$. 
The set of vital curves $F$ on which the degree is $0$ is
preserved by this indexing but the bundles themselves are not.
We will use $\ell$ as a superscript in order to specify the
level.

\begin{prop} \label{prop:fgend} 
  For any integer integer $n\geq 4$, the closed subcone of
  $N_1(\mnb)_{\R}$ generated by $c_1(\D_{\ovl}^{\ell})$ for all $\ovl
  \in P_{1,even}^n$, $\ell \geq 4$ and even, or $\ovl \in P_{1,odd}^n$,
  $\ell \geq 3$ and odd, is a finitely generated rational polyhedral
  cone.
\end{prop}

\begin{proof}
  It follows from \eqref{eqn:deven} and \eqref{eqn:dodd} and the
  preceding discussion that for any $\ovl \in
  P_{1,even}^n$ or $P_{1,odd}^n$, we may divide $\D_{\ovl}^{\ell}$ by $\ell/2$
  and take the limit as $\ell \to \infty$ (while keeping its parity
  fixed) to get $\Q$-line bundles on $\mnb$ which we denote by
  $\D_{\ovl}^{even}$ and $\D_{\ovl}^{odd}$. We have
\begin{itemize}
\item For $\ov{\nu} \in P_{1,even}^4$  such that upto ordering $\nu_1 = \nu_2
  \neq 0$ and $\nu_3 = \nu_4 \neq 0$. Then
\[
\deg(\D_{\ov{\nu}}^{even}) 
= \begin{cases} 
1 & \text{if all $\nu_k  \in \{\vp_{e-1},\vp_{e}\}$,} \\
0 & \text{otherwise.}
\end{cases}
\]
\item For $\ov{\nu} \in P_{1,odd}^4$  such that all $\nu_k$ are equal or, upto
  order, $\ov{\nu} = (\vp_1,\vp_1,\vp_{o-1}, \vp_{o})$ or
  $(\vp_{o-1},\vp_{o-1},\vp_{o},\vp_{o})$. Then
\[
\deg(\D_{\ov{\nu}}^{odd}) 
= \begin{cases}
1 & \text{if all $\nu_k  \in \{\vp_{o - 1}, \vp_{o}\}$,} \\
0 & \text{otherwise.}
\end{cases}
\]
\end{itemize}
It is then immediate that for $\ell \geq 4$ and even we have
$\D_{\ovl}^{\ell} = \D_{\ovl}^4 + \tfrac{\ell -4}{2}
\cdot\D_{\ovl}^{\infty}$ and for $\ell \geq 3$ and odd we have
$\D_{\ovl}^{\ell} = \D_{\ovl}^3 + \tfrac{\ell -3}{2} \cdot
\D_{\ovl}^{\infty}$. It follows that the closed cone generated by all
$c_1(\D_{\ovl}^{\ell})$ is generated by the first Chern classes of
$\D_{\ovl}^{\ell}$, $\ell = 3,4$, $\D_{\ovl}^{even}$,
$\D_{\ovl}^{odd}$ and there are only finitely many choices for $\ovl$
for a fixed $n$.
\end{proof}

\begin{rem}
  We do not know whether $\D_{\ovl}^{even}$ and $\D_{\ovl}^{odd}$ are
  semiample. Note that since their degree on any vital curve is $0$ or
  $1$, in particular an integer, they are actually line bundles and
  not just $\Q$-line bundles. It would be interesting to have a more
  geometric description of these bundles.
\end{rem}

\subsubsection{$E_6$}
In this case\footnote{This and other such computations for exceptional
  groups mentioned below were carried out using GAP
  \cite{GAP4}.}, $r_{(\l,\l,\l)} = 1$ for $\l = \vp_1$ or $\vp_6$.
Since the representations are not self-dual, it follows that the
conformal blocks of level $1$ for $E_6$ are the same, again upto
scaling, as those for $\mg{sl}_3$.

\subsubsection{$E_7$}
In this case, $(V_{\vp_7} \otimes V_{\vp_7} \otimes V_{\vp_7})_{E_7} =
0$, so the corresponding conformal block is also trivial. It follows
that for $E_7$ the determinants of conformal blocks of level $1$ are
the same, upto scaling, as those for $\mg{sl}_2$.

\subsubsection{$E_8$}
As there are no non-trivial representations of level $\leq 1$ we do
not get any non-trivial conformal blocks. In higher genus the
situation is more interesting as we see in Corollary \ref{cor:e8}.

\subsubsection{$F_4$ and $G_2$}
In both these cases the rank of the conformal blocks for $\ovl =
(\vp_4,\vp_4,\vp_4)$ for $F_4$ and $\ovl = (\vp_1,\vp_1,\vp_1)$ for
$G_2$ is $1$. It follows that the determinants of the conformal blocks
at level $1$ for both these cases are equal (upto a global
scalar). Moreover, for $\ovl =(\vp_4,\vp_4,\vp_4,\vp_4)$ for $F_4$ and
$\ovl = (\vp_1,\vp_1,\vp_1\vp_1)$ for $G_2$, $\deg(\dl) > 0$.

For any $n$ and $\ovl = (\l,\l,\dots,\l)$, $\l = \vp_4$ or $\vp_1$ as
$\mg{g} = F_4$ or $G_2$, it follows from factorisation and the above
that $\vl$ has rank $Fib(n-1)$, where $Fib(i)$ denotes the $i$'th
Fibonacci number. Moreover, Proposition \ref{prop:deg} implies that if
a vital curve $F$ corresponds to a partition $\{1,2,\dots,n\} =
\sqcup_{k=1}^4 N_k$, then $\deg(\vl)|_F = c\prod_{k=1}^4 Fib(|N_k|)$
where $c$ is a positive constant (which can be determined). One sees
that the F-conjecture imples that $\D_{\ovl}$ is ample.

\section{The case $g >0$} \label{sec:g>0} 

As we have remarked before, for general $g$ the WZW/Hitchin connection
does not always lift to a flat connection on $\mb{V}_{g,n,\ovl}$
restricted to $\mr{M}_{g,n}$, so we cannot directly apply the same
method as in the $g=0$ case. However, we do get a flat connection on
all vector bundles induced by representations of $GL_r$, $r =
\mr{rank}(\mb{V}_{g,n,\ovl})$, which are trivial on the centre. The
Chern classes of all such bundles may, in principle, be computed as
before provided that we can compute all intersections of boundary
divisors.  Note that for such bundles the connection is canonical, so
one does not need to account for choices of coordinates and all one
needs is Proposition \ref{prop:res}.

To compute the Chern classes of $\mb{V}_{g,n,\ovl}$ itself, it
suffices to know the Chern classes of the associated bundles as above
along with $c_1(\mb{V}_{g,n,\ovl})$. By Remark \ref{rem:deg} and
Corollary \ref{cor:deg4} it follows that to compute
$c_1(\mb{V}_{g,n,\ovl})$ in general it suffices to consider the case
$(g,n) = (1,1)$.

\subsection{The case $g=1$, $n=1$}

For the rest of this section, contrary to our earlier notation, for
$\l \in P_{\ell}$ we shall denote by $\vl$ the bundle of conformal
blocks $\mb{V}_{1,1,\l}$. Our main result is:
\begin{thm} \label{thm:m11} 
  Let $\mg{g}$ be a
  simple Lie algebra, $\ell \geq 0$ an integer and $\l \in
  P_{\ell}$. Then
\[
\deg(\mb{V}_{\l}) = 
\frac{1}{2(\ell + h^{\vee})} \cdot
\Bigl \{
 \frac{r_{\l}(c_{\l} + \ell\dim(\mg{g}))}{12}
- \sum_{\mu \in P_{\ell}}c_{\mu}r_{(\l,\mu,\mu^*)}
\Bigr \} \ .
\]
\end{thm}

\begin{proof}
  In order to apply the results of \S \ref{subsec:kzconn} we must make
  explicit choices for a smooth projective curve mapping onto $\mel$,
  a coordinate for the zero section over the smooth locus and a
  bidifferential. The particular choices do not really matter; what is
  important is that choices can be made so that we get a well defined
  connection on the pullback of $\vl$ over the smooth locus.

For simplicity, we shall now work over the field of complex numbers
$\C$.  The formula for the degree we shall obtain will clearly hold
over any field of characteristic zero.

Let $\Gamma(8)$ be the principal congruence subgroup of $SL_2(\Z)$, of
level $8$ and $Y(8)$ the corresponding modular curve, \emph{i.e.}, the
quotient $\mg{H}/\Gamma(8)$, see \emph{e.g.}~\cite[Appendix A, \S
13]{silverman-aec}.  The semi-direct product $\Z^2 \ltimes \Gamma(8)$
acts on on $\C \times \mg{H}$ by
\[
\left  ((m,n), \bigl ( 
\begin{smallmatrix} 
a & b \\ c& d 
\end{smallmatrix}
\bigr ) \right ): (z,\tau) \mapsto \left (\frac{z + m\tau + n}{c\tau +
  d},\frac{a\tau + b}{c\tau + d} \right) 
\]
where $z$ (resp.~$\tau$) is the usual coordinate on $\C$
(resp.~$\mg{H}$).  The quotient is the universal family of elliptic
curves with full level $8$ structure over $Y(8)$. It extends to a
semi-stable family over the smooth compactification $X(8)$ of $Y(8)$
thus giving rise to a surjective morphism $X(8) \to \omc_{1,1}$.

Set $q = exp(\pi i \tau)$ and $w = exp(\pi i z)$, and consider the
functions $f_1$ and $f_2$ given by
\[
f_1 := \vt_{11}(2z,\tau) = \vt_{11}(w^2, q) = \sum_{m \in \Z}(-1)^mq^{(m+\tfrac{1}{2})^2}w^{4m + 2}
\]
and 
\[
f_2 := \vt_{00}(2z, \tau) = \vt_{00}(w^2,q) = \sum_{m \in \Z} q^{m^2}w^{4m} \ .
\]
The functions $\vt_{00}$ and $\vt_{11}$ are two of the classical
Jacobi theta functions as defined, for example, in
\cite{mumford-theta1}.

It follows from the transformation formulae in
\cite[p.~55]{mumford-theta1} that $f_1/f_2$ descends to a well defined
rational function on the universal family over $Y(8)$ and gives a
coordinate for the zero section. This is the coordinate we shall use.

For an elliptic curve $E = \C/ \Lambda$ there is a natural
bidifferential on $E \times E$ given by
\[
\omega := \wp_{\Lambda}(x-y)dxdy
\]
where $\wp_{\Lambda}$ denotes the Weirstrass $\wp$ function associated
to the lattice $\Lambda$. One checks that this is well defined,
i.e.~does not depend on the presentation of $E$ as $\C/\Lambda$. It
follows that this gives a family of bidifferentials associated to any
family of elliptic curves. In particular, this gives a bidifferential
on the universal family over $Y(8)$.

Having chosen coordinates and a bidifferential, we get a (flat)
connection on the pullback of $\mb{V}_{\l}$ to $Y(8)$. Since all our
data is given explicitly, we may use classical properties of $\vt$
functions and modular curves to explicitly work out all terms involved
in the discussion in Section \ref{subsec:kzconn} and Proposition
\ref{prop:ev} to compute the degree of the pullback of $\mb{V}_{\l}$
to $X(8)$ as in the $g=0$ case, hence the degee of $\mb{V}_{\l}$. This
is not difficult, however, for the sake of variety we use a somewhat
different argument which gives, as a byproduct, a simpler formula in
the case of $\mg{sl}_2$.

From the existence of the flat connection on the pullback of
$\mb{V}_{\l}$ to $Y(8)$, the discussion in Section
\ref{subsec:kzconn} and Proposition \ref{prop:ev}, it follows that
\[
\deg(\mb{V}_{\l})=
\frac{1}{2(\ell + h^{\vee})} \cdot
\Bigl \{
 r_{\l}(\alpha\ell\dim(\mg{g}) + \beta c(\l))
- \sum_{\mu \in P_{\ell}}c(\mu)r_{(\l,\mu,\mu^*)}
\Bigr \}
\]
where $\alpha$, $\beta $ are constants independent of $\mg{g}$. Here
the term $\ell\dim(\mg{g})$ comes from the bidifferential, the term
involving $c(\l)$ comes from the change of coordinate and the last
term comes from Proposition \ref{prop:res}. Note that the coefficient
of the last term is determined by the fact that there is a unique
boundary component in $\mel$.

We now show that $\alpha = \beta = 1/12$ by considering the case
$\mg{g} = \mg{sl}_2$. Let the weight $\lambda \in P_{\ell}$ corespond
to the integer $i$, so $0 \leq i \leq \ell$.  The factorisation
formula shows that the bundle of conformal blocks is $0$ unless $i$ is
even in which case the rank is equal to $\ell + 1 - i$. This is
because the $\mu$ which give rise to a non-zero summand correspond to
integers $j$ in the range from $i/2$ to $\ell - i/2$.

Since the individual local summands always have rank $1$ and
for a weight $\mu$ corresponding to an integer $j$ we have
$c(\mu) = \tfrac{j^2 + 2j}{2}$  it follows that
 the summand coming from Proposition \ref{prop:deg} 
\[
\sum_{\mu \in P_{\ell}}c(\mu)r_{(\l,\mu,\mu^*)} = \sum_{j= i/2}^{\ell -i/2} \frac{j^2 + 2j}{2} \ .
\]
The other terms are
\[
r_{\l}\alpha\ell\dim(\mg{g}) =  3 \alpha \ell (\ell + 1 -i)  
\]
and 
\[
r_{\l}\beta c(\l) = \frac{\beta  (\ell + 1 -i)(i^2 + 2i)}{2} \ .
\]

We have
\begin{multline*}
\sum_{j= i/2}^{\ell -i/2} \frac{j^2 + 2j}{2} = \sum_{j= 1}^{\ell -i/2} \frac{j^2 + 2j}{2}
-  \sum_{j= 1}^{i/2 -1} \frac{j^2 + 2j}{2} \\
= \frac{(\ell -i/2)(\ell +1 - i/2)(2\ell - i + 1)}{12}  - \frac{(i/2 -1)(i/2)(i - 1)}{12} \\
+ \frac{(\ell -i/2)(\ell + 1 -i/2)}{2} - \frac{(i/2 - 1)(i/2)}{2} \\
=  \frac{(\ell -i/2)(\ell +1 - i/2)(2\ell - i + 7)}{12}  - \frac{(i/2 -1)(i/2)(i + 5)}{12} 
\end{multline*}

If $i=0$, this is $\ell(\ell + 1)(2\ell + 7)/12$. Since $h^{\vee} = 2$
it follows that in this case we get
\[
\deg(\mb{V}_0) = \frac{1}{2(\ell + 2)}(3\alpha\ell (\ell + 1) - \ell(\ell + 1)(2\ell + 7)/12) =
\frac{1}{24(\ell + 2)}(\ell(\ell + 1)(36\alpha - 2\ell - 7) \ .
\]
Since $12\deg(\mb{V}_0)$ must be an integer for any $\ell$ it follows
that we must have $\alpha = 1/12$.

\smallskip

For an arbitrary $\l$ we therefore have
\begin{multline*}
-12\Bigl \{ r_{\l}(\alpha\ell\dim(\mg{g}) + \beta c(\l)) - \sum_{\mu \in P_{\ell}}c(\mu)r_{(\l,\mu,\mu^*)}\Bigr \} \\
= (\ell -i/2)(\ell +1 - i/2)(2\ell - i + 7)
  - (i/2 -1)(i/2)(i + 5)   - 3\ell(\ell + 1 - i) - 6\beta (\ell + 1 - i)(i^2 + 2i) \\
= (\ell -i/2)(\ell +1 - i/2)(2\ell - i + 7)
  - (i/2 -1)(i/2)(i + 5)  - (\ell + 1 -i)(6\beta i^2 + 12\beta i + 3\ell) \\
= (\ell -j )(\ell +1 -j)(2\ell -2j + 7)  -(j -1)(j)(2j + 5) + (\ell +1 -2j)(24\beta j^2 + 24\beta j + 3\ell)
\end{multline*}
where we have put $j:= i/2$ in the last line. Since
$12\deg(\mb{V}_{\l})$ is an integer, the expression, thought of as
polynomial in $\ell$, must be divisible by $\ell + 2$.  Working modulo
$\ell + 2$, the last line above is equal to
\begin{multline*}
(\ell-j)(\ell + 1 - j)(3 - 2j) - (j-1)j(2j + 5) + (1+2j)(24\beta j^2 + 24\beta j +3\ell) \\
\simeq (\ell - j)(-1 -j)(3 - 2j) - (j-1)j(2j+5) +(1 +2j)(24\beta j^2 + 24\beta j + 6) \\
\simeq (2 +j)(1 +j)(3 -2j) - (j-1)j(2j +5) + (1 + 2j)(24\beta j^2 + 24\beta j +6) \\
= (j^2 + 3j +2)(3 -2j) -j(2j^2 +3j -5) +(48\beta j^3 +72\beta j^2 + (24\beta  + 12)j + 6)\\
= -2j^3 -3j^2 +5j +6 -2j^3 -3j^2 +5j + (48\beta j^3 +72\beta j^2 + (24\beta  + 12)j + 6) \\
= (48\beta - 4)j^3 + (72\beta  - 6)j^2 + (24\beta  - 2)j
\end{multline*}
This polynomial must be zero modulo $\ell + 2$ so we must also have
$\beta = 1/12$.
\end{proof}

\begin{cor} \label{cor:e8} 
For $\mg{g} = \mg{sl}_2$ and $\l \in P_{\ell}$ even
\begin{equation}
\deg(\mb{V}_{\l}) = - \Bigl ( \frac{\l^2 - 3\l\ell + 2\ell^2 - \l + 2\ell}{24} \Bigr ) \ .
\end{equation}
\end{cor}
\begin{proof}
  This follows easily from the theorem using the computations made in
  its proof.
\end{proof}
One can see from the formula that for $\mg{sl}_2$, $\deg(\mb{V}_{\l})$
is always negative if $\ell > 0$. Based on many other computations, it
seems that this holds for all Lie algebras $\mg{g}$ (whenever $r_{\l}
> 0$) except for the following example.

\begin{cor}
  For all $g,n$ with $g > 0$ and $n \geq 1$ if $g=0$, $\mg{g}=
  \mg{e}_8$, $\ell =1$ and $\ov{0} = (0,\dots,0)$, we have
  $\mb{V}_{g,n,\ov{0}} \cong \mr{L}^{\otimes 4}$, where $\mr{L}$
  denotes the Hodge line bundle on $\mgnb$.
\end{cor}
\begin{proof}
  It is well known, and since $P_1 = \{0\}$ follows easily from the
  factorisation formula, that $r_{\ov{0}} =1$ for all $g,n$ as above.
  It thus suffices to compute the degree on all F-curves. If we have
  an F-curve of type $\mfb$ then again using $P_1 = \{0\}$ the degree
  is easily seen to be zero.

  Now consider the case $(g,n) = (1,1)$ and apply Theorem
  \ref{thm:m11}.  We have $\dim(\mg{e}_8) = 248$ and $h^{\vee} = 30$
  so it follows that in this case $\deg(\mb{V}_0) = 1/3$. Since
  $\deg(\mr{L}) = 1/12$ and $\pic(\mel)$ is a free abelian group
  generated by $\mr{L}$, the claim follows in this case.

  For general $g,n$ it follows from the propagation of vacuum that it
  suffices to consider the cases $(g,n) = (1,1)$ or $(g,n) = (g,0)$,
  $g>1$.  It is also well known that $\pic(\mgnb)$ is torsion
  free. Since $\mb{V}_{g,n,\ov{0}}$ is trivial on all F-curves of type
  $\mfb$ it follows that $\mb{V}_{g,n,\ov{0}}$ is a power of
  $\mr{L}$. To compute the precise power if $g > 1$, one reduces to
  the $(1,1)$ case by restricting to a family of stable curves of
  arithemtic genus $g$ obtained by gluing a fixed smooth curve of
  genus $g-1$ to a varying family of $1$-pointed curves of genus $1$
  and then using the factorisation formula.
\end{proof}

\begin{rem}
  The construction of the bundles of conformal blocks provides a
  natural section of $\mb{V}_{g,n,\ov{0}}$. For $\mg{g} = \mg{e}_8$
  and $g > 0$, using some observations of Faltings
  \cite[p.~12]{faltings-theta} this can be shown to be equal, upto a
  scalar, to the pullback via the Torelli map of the theta series
  associated to the $E_8$ lattice (which is a section of the fourth
  tensor power of the Hodge line bundle over $\mr{A}_g$, the moduli
  stack of principally polarised abelian vareties).
\end{rem}

\subsection{Nef bundles in higher genus}

As we have noted, the bundles of conformal blocks are often not nef on
$\mgnb$ if $g>0$. However, there is a canonical way of associating a
nef divisor class on $\mgnb$ to any $\ovl \in P_{\ell}^n$.
\begin{prop}
  For any $g \geq 0$ and $\ovl \in P_{\ell}^n$, there is a unique $t
  \in \Q$ such that $c_1(\mb{V}_{g,n,\ovl}) + tc_1(\mr{L})$ is nef on
  $\mgnb$ and is trivial on any family of elliptic tails.
\end{prop}

\begin{proof}
  By the functoriality of conformal blocks, the factorisation formula
  and Lemma \ref{lem:gen} it follows that $\mb{V}_{g,n,\ovl}$ has
  non-negative degree on any F-curve of $\mof$ type. It also has
  constant degree on any family of elliptic tails since any such
  family gives the same class in $N_1(\mgnb)$.  On the other hand
  $\mr{L}$ has degree zero on all F-curves of $\mof$ type and has a
  (constant) positive degree on any family of elliptic tails. It
  follows that there is a unique $t \in \Q$ (depending on $\ovl$ and
  $g$) such that $c_1(\mb{V}_{g,n,\ovl}) + tc_1(\mr{L})$ is F-nef and
  trivial on any family of elliptic tails.

  Using the functoriality of conformal blocks, the factorisation
  formula and Lemma \ref{lem:gen} again, it follows that the
  restriction of $\mb{V}_{g,n,\ovl}$ to the locus of flag curves is
  generated by sections, so has nef first Chern class. Since $\mr{L}$
  restricts to a trivial bundle on the locus of flag curves it follows
  from \cite[(0.3)]{GKM} that $c_1(\mb{V}_{g,n,\ovl}) + tc_1(\mr{L})$
  is nef on $\mgnb$.
\end{proof}

\section{Questions} \label{sec:questions} 

We conclude this paper by discussing some natural questions
concerning the bundles of conformal blocks.

\begin{ques}
  Given a simple Lie algebra $\mg{g}$ and an integer $n \geq 4$, is
  the closure of the subcone of $N^1(\mnb)$ generated by determinants
  of conformal blocks for $\mg{g}$ and all levels $\ell$ finitely
  generated?  If so, is there an algorithm for computing this cone?
\end{ques}

We do not know what to expect. For $n=5$ the determinants of conformal
blocks for $\mg{sl}_2$ generate the nef cone. This does not appear to
hold for $n=6$, in which case computer calculations\footnote{These
  were carried out in part using \tt{polymake} \cite{polymake}.}
suggest that the cone generated by conformal blocks for $\mg{sl}_2$
has $128$ vertices, $127$ coming from the critical level and the
remaining vertex corresponding to $\ovl = (1,1,1,1,1,1)$ and level
$1$.

\begin{ques}
  Given an integer $n$, do the determinants of conformal blocks for
  all simple Lie algebras and all levels $\ell$ generate the nef cone
  of $\mnb$?
\end{ques}

We do not know if this is true for any $n \geq 6$. However, for $n=6$
the cone generated by conformal blocks for both $\mg{sl}_2$ and
$\mg{sl}_3$ appears to strictly contain the cone for only $\mg{sl}_2$.

\smallskip

One may ask a similar question for $\mnb/S_n$.

\bigskip

For $g > 0$, the procedure we described at the beginning of \S
\ref{sec:g>0} shows that the Chern classes in cohomology lie in the
subring generated by the classes of divisors. We therefore ask the
following:
\begin{ques}
  Do the Chern classes in $\mathrm{\mr{CH}}^*(\mgnb)_{\Q}$ of
  all conformal blocks bundles lie in the subring generated by
  divisors?  If not, do they all lie in the tautological subring?
\end{ques}

The second part of the question was suggested by Brendan Hassett. It
seems likely that it has a positive answer.


\def\cprime{$'$}

\end{document}